\title{Ternary mappings of some evolution algebras}
\author{C\'andido Mart\'in Gonz\'alez$^{1}$, Jacques Rabie$^{2}$, Juana S\'anchez-Ortega$^{2}$}
\subjclass[2010]{Primary  
17A36; 17D92.
Secondary 17D99
}
\keywords{evolution algebras, ternary automorphisms, ternary derivations}
\newcommand{\comment}[1]{}
\def\taffaut{\mathop{\hbox{\bf Taut}}}
\def\taut{\mathop{\hbox{\rm Taut}}}
\def\tder{\mathop{\hbox{\rm Tder}}}
\def\alg{\mathop{\hbox{\bf alg}}}
\def\grp{\mathop{\hbox{\bf grp}}}
\def\Gm{\mathop{\hbox{\bf G}_{\bf m}}}
\def\s{\sigma}
\def\t{\tau}
\newcommand{\K}{\mathbb{K}}
\newcommand{\N}{\mathbb{N}}
\newcommand{\A}{\mathcal{A}}
\newcommand{\B}{\mathcal{B}}
\newcommand{\M}{\mathcal{M}}
\newcommand{\D}{\mathcal{D}}
\theoremstyle{plain} 
\newtheorem{theorem}{Theorem}[section]
\newtheorem{lemma}[theorem]{Lemma}
\newtheorem{cor}[theorem]{Corollary}
\newtheorem{prop}[theorem]{Proposition}
\theoremstyle{definition} 
\newtheorem{define}[theorem]{Definition}
\newtheorem{example}[theorem]{Example}
\begin{document}


\address[1]{Departamento de \'Algebra, Geometr\'ia y Topolog\'ia. Universidad de M\'alaga. Spain}
\address[2]{Department of Mathematics and Applied Mathematics. University of Cape Town. South Africa}

\email{candido\_\,m@uma.es}
\email{jacques.rabie@uct.ca.za}
\email{juana.sanchez-ortega@uct.ac.za}

\maketitle

\begin{abstract}
The group scheme of ternary automorphisms of a perfect finite dimensional evolution algebra 
$\A$ is computed. The main advantage of using group schemes is that it allows to apply the Lie functor to determine the Lie algebra of ternary derivations of $\A$. Using the generalised inverse of a matrix, we provide a precise classification of all ternary derivations of an arbitrary finite-dimensional evolution algebra $\A$. The ternary derivations of all 2-dimensional evolution algebras are also computed. 
\end{abstract}

 \section{Introduction}  
 
 The notion of an evolution algebra was introduced in 2006 by Tian and Vojtechovsky \cite{TV} to model algebraically the self reproduction process in non-Mendelian genetics. The theory of evolution algebras has many applications and connection to other areas of Mathematics and other fields, like, for instance, graph theory, group theory, Markov chains, Mathematical Physics, Genetics as shown in Tian's book \cite{Tbook}. Due to this fact, evolution algebras have been objects of interests of several researchers. For instance, different types of evolution algebras have been classified and studied in \cite{AA0, CCSMVV, squares, CV}; derivations and automorphisms of certain evolution algebras have been investigated in \cite{AA, CCCR, CCCRR, CGOT}. We refer the reader to \cite{CFNVT} for a survey on the recent developments on the theory of evolution algebras. 
 
In this paper, we focus our attention on ternary automorphisms and ternary derivations of finite-dimensional evolution algebras. Ternary derivations are triple of linear maps satisfying a certain derivation identity. More precisely, a triple of linear maps $(d_1, d_2, d_3)$ on an algebra $\mathcal{A}$ is said to be a {\bf ternary derivation} of $\mathcal{A}$ if it satisfies the following: 
\begin{equation}
   d_1(xy) = d_2(x)y + x d_3(y),  \, \, \mbox{ for all } \, \, x, y \in \mathcal{A}. \tag{Td} \label{Td}
\end{equation}
We could say that ternary derivations originated with Newton and Leibniz but the general notion was introduced later on in 
\cite{JGPI}. In \cite{JGPI2, PI} ternary derivations were used to study some unital associative algebras; in \cite{S} ternary derivations of separable associative and Jordan algebras were described, while ternary derivations and ternary automorphisms of triangular algebras were investigated in \cite{MBMGSOV}. The notion of a ternary automorphisms was introduced in \cite{JGPI}. A triple of bijective linear maps $(f_1, f_2, f_3)$ on $\A$ is called a {\bf ternary automorphism} of $\A$ if the identity below is satisfied for all $x, y \in \A$. 
\begin{equation}
f_1(xy) = f_2(x)f_3(y). \tag{Ta} \label{Ta} 
\end{equation}
The set $\taut(\A)$ consisting of all ternary automorphisms of $\A$ has a group structure with the component-wise composition; while the set $\tder(\A)$ consisting of all ternary derivations of $\A$ has a Lie algebra structure with the component-wise Lie bracket.

Our motivation comes from the results obtained for automorphisms and derivations of evolution algebras and \cite[Problem 4.1]{CFNVT}. We solve such a problem in this paper. To be more precise, in \S\ref{sec2} we introduce the necessary background and prove some preliminary results on evolution algebras, polynomial ideals and group schemes. 
Using the theory of group schemes, we investigate the ternary automorphisms of a perfect evolution algebra $\A$ in \S\ref{sec3}. A necessary and sufficient condition for a triple of bijective linear maps $(f_1, f_2, f_3)$ to be a ternary automorphism is provided in Lemma \ref{necsufTA}. Moreover, we prove that $f_1$ is completely determined by $f_2$ and $f_3$. Once the group scheme of ternary automorphism is
known, we describe $\tder(\A)$ (see Theorem \ref{TAUT} and Corollary \ref{TderPer}). At this point, 
it is worth mentioning here that imposing the evolution algebra to be perfect should not surprise the 
reader, since automorphisms have been fully described, only in the context 
of perfect evolution algebras (see \cite{AA} for more details). Regarding to ternary derivations, using linear algebra techniques, we are able to remove the perfection hypothesis on $\A$. More concretely, in \S\ref{sec4} we provide necessary and sufficient conditions for a triple of linear maps $(d_1, d_2, d_3)$ to be a ternary derivation (see Proposition \ref{iffCondition}). The first condition relates $d_2$ and $d_3$, while the second condition links $d_1$ with the diagonal of $d_2 + d_3$. These two conditions are also expressed as a linear system of equations (see \eqref{system1}) and as a matrix equation (see \eqref{system3}).
In the perfect case, we improve the result obtained in the previous section providing a precise description (see Corollary \ref{iffTD}) of all ternary derivations, including a formula to compute $d_1$. In the non-perfect case, we determine necessary conditions for $d_2$ and $d_3$ to satisfy \eqref{system1} (see Proposition \ref{d2d3}). Next, using generalised matrix inverses, we are able to find the general solution of \eqref{system3} for $d_1$ (see Lemma \ref{partialsol}), as well as, necessary and sufficient conditions for \eqref{system3} to have a solution (see Proposition \ref{d1solution}). We collect all these results in Theorem \ref{charDerTarb}, which provides a precise characterisation of ternary derivations of (arbitrary) finite-dimensional evolution algebras.
We close the paper computing all the ternary derivations of 2-dimensional evolution algebras in \S\ref{case2}. 

	
\section{Preliminaries}  \label{sec2}

In this paper, we consider algebras of finite dimension over an arbitrary field 
$\K$ of any characteristic. Recall that by an algebra $\A$ we mean a vector space endowed with a bilinear operation 
$\A \times \A \to \A$, denoted $(a, b) \mapsto ab$, and called the product of $\A$. 

\subsection{Evolution algebras} 
Let $n \geq 1$ and write $\Lambda$ to denote the set $\{1, \ldots, n\}$. An $n$-dimensional algebra $\A$ is called an {\bf evolution algebra} if it it admits a basis $\B = \{e_i\}_{i \in \Lambda}$, called a {\bf natural basis} of $\A$, such that 
\begin{align}
e_i \cdot e_i &= \sum^n_{k = 1} w_{ki}e_k, \quad \mbox{for} \quad i \, \in \Lambda, \tag{E1} \label{E1}
\\
e_i \cdot e_j &= 0, \, \, \mbox{for} \, \, i, j \, \in \Lambda \, \, \mbox{with} \, \,  i \neq j \tag{E2}. \label{E2}
\end{align}
The scalars $w_{ki}$ from \eqref{E1} are called the {\bf structure constants} of $\A$ with respect to $\B$, while the matrix $\M_\B:= (w_{ki})$ is often called the {\bf structure matrix} of $\A$. 
If $\A^2 = \A$, or equivalently $\M_\B$ is invertible, we say that $\A$ is {\bf perfect}.

\subsection{Polynomial ideals}
Let $D$ be a domain, $R = D[x,\, y,\, z]$ the ring of polynomials in the indeterminates $x,\, y, \, z$ and the ideal $I =  (xyz - 1)$ of $R$. We claim that the factor ring $R/I$ is isomorphic to the localization $D[x,y]S^{-1}$, where $S =\{(xy)^n \mid n \in\N\}$ (here $0 \in \N$). In fact, using that the map $\pi: D[x,y,z]\to D[x,y]S^{-1}$ given by $p(x, y, z)\mapsto p\left(x,y,\frac{1}{xy}\right)$ is a ring epimorphism, one can easily prove that any ring homomorphism $f: D[x, y, z]\to S$ such that $f(I) = 0$ factorizes through $\pi$. This proves that $D[x, y]S^{-1}\cong R/I$, which implies that $I$ is a prime ideal.
More generally, for $R = \K[x_1,\ldots, x_n, z]$ one can prove that $I = \left(z\prod_1^n x_i - 1 \right)$ is a prime ideal of $R$, which yields that $R/I$ is a domain. To do so, use that $R/I \cong RS^{-1}$ for $S = \{(\prod_1^n x_i)^n \mid n \in\N\}$.

Given $k > 1$, we claim that the ideal $\left( z \prod_{i=1}^k x_i-1, \, w \prod_{i=1}^k y_i-1 \right)$ of the polynomial algebra $\K[x_1, \ldots x_k,\, y_1, \ldots, y_k, \, z,\, w]$ is prime. We first prove a preliminary lemma:

\begin{lemma}\label{unouno}
Let $X$ and $Y$ be disjoint sets of indeterminates and $I\triangleleft \K[X]$, $J\triangleleft \K[Y]$.
Then, the following isomorphisms hold for the ideal $I^e$ (respectively, $J^e$) of $\K[X\sqcup Y]$ generated by the image of $I$ (respectively, $J$). 
\[
\K[X]/I\otimes \K[Y]/J\cong (\K[X]\otimes \K[Y])/(I\otimes \K[y] + \K[x]\otimes J)\cong \K[X\sqcup Y]/(I^e+J^e).
\] 
\end{lemma}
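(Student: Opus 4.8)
The plan is to prove the two isomorphisms separately, the second being essentially a relabelling once the first is in hand. Throughout I abbreviate $A=\K[X]$ and $B=\K[Y]$, and I read the middle term as $(A\otimes B)/(I\otimes B+A\otimes J)$, where $I\otimes B$ denotes the image of the natural map $I\otimes_\K B\to A\otimes_\K B$ (a submodule, indeed an ideal, of $A\otimes B$), and similarly for $A\otimes J$; I take the statement's $\K[y]$ and $\K[x]$ to mean $\K[Y]$ and $\K[X]$.

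For the first isomorphism I would establish the general fact that, for $\K$-algebras $A,B$ with ideals $I\triangleleft A$ and $J\triangleleft B$, one has $(A/I)\otimes_\K(B/J)\cong (A\otimes_\K B)/N$ with $N:=I\otimes B+A\otimes J$. The forward map is the tensor product $\pi_A\otimes\pi_B$ of the two quotient maps; it is surjective and annihilates $N$, so it descends to $\bar\phi\colon (A\otimes B)/N\to (A/I)\otimes(B/J)$. To produce an inverse, I define a $\K$-bilinear map $(A/I)\times(B/J)\to (A\otimes B)/N$ by $(\bar a,\bar b)\mapsto \overline{a\otimes b}$; this is well defined because replacing $a$ by $a+i$ and $b$ by $b+j$ alters $a\otimes b$ only by $i\otimes b+a\otimes j+i\otimes j\in N$. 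By the universal property of $\otimes$ it induces $\psi\colon (A/I)\otimes(B/J)\to (A\otimes B)/N$, and verifying $\psi\bar\phi=\id$ and $\bar\phi\psi=\id$ on elementary tensors is immediate. This route avoids computing the kernel of $\pi_A\otimes\pi_B$ directly.

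For the second isomorphism I would use that, since $X$ and $Y$ are disjoint, there is a $\K$-algebra isomorphism $\K[X]\otimes_\K\K[Y]\xrightarrow{\sim}\K[X\sqcup Y]$ sending $x\otimes 1\mapsto x$ and $1\otimes y\mapsto y$, both sides being the free commutative $\K$-algebra on $X\sqcup Y$. It then remains to track the ideals under this identification: I claim it carries $I\otimes B$ onto $I^e$ and $A\otimes J$ onto $J^e$. Indeed, $I^e$ corresponds to the ideal of $A\otimes B$ generated by $\{i\otimes 1:i\in I\}$; since $I$ is an ideal, a typical element $\sum_k(a_k\otimes b_k)(i_k\otimes 1)=\sum_k a_k i_k\otimes b_k$ lies in $I\otimes B$, while conversely $\sum_k i_k\otimes b_k=\sum_k(i_k\otimes 1)(1\otimes b_k)$ lies in that ideal, so the two coincide; the argument for $J$ is symmetric. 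Hence $N$ corresponds to $I^e+J^e$, and passing to quotients yields the claimed isomorphism.

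The only genuinely technical point is the first isomorphism, precisely the assertion that the kernel of $\pi_A\otimes\pi_B$ is no larger than $N$; the explicit-inverse construction sidesteps this, so I anticipate no real obstacle. The remaining care is bookkeeping: reading $I\otimes B$ as the image of $I\otimes_\K B$ inside $A\otimes_\K B$ (over the field $\K$ this map is injective anyway, so the distinction is harmless), and matching the generators of $I^e$ and $J^e$ with $I\otimes 1$ and $1\otimes J$ under the polynomial-ring identification.
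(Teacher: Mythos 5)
Your proposal is correct and follows essentially the same route as the paper: the first isomorphism is the standard ``tensor product of quotients'' fact (which the paper simply cites from Hu's \emph{Introduction to Homological Algebra}, Theorem 7.7, whereas you prove it directly via the well-defined bilinear map and its induced inverse), and the second is the identification $\K[X]\otimes\K[Y]\cong\K[X\sqcup Y]$ together with the correspondence of $I\otimes\K[Y]+\K[X]\otimes J$ with $I^e+J^e$, which is exactly the kernel computation implicit in the paper's appeal to the First Isomorphism Theorem. Your version is merely more self-contained; no gaps.
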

 
\begin{proof} 
Apply \cite[Theorem 7.7]{Hu} to the projections $\K[X]\to \K[X]/I$, $\K[Y]\to \K[Y]/J$ to get the first isomorphism. For the second, apply the First Isomorphism Theorem to the epimorphism $\K[X]\otimes \K[Y]\to \K[X\sqcup Y]/(I^e+J^e)$ mapping the element $\sum_i (p_i(X)\otimes q_i(Y))$ onto $\sum_i \overline{p_i(X)}\overline{q_i(Y)}$, where $\overline{a}$ denotes the equivalence class of $a$. 
\end{proof}

\begin{prop} \label{unodos} 
For $k > 1$, the ideal $I:= \left( z \prod_{i=1}^k x_i-1, \, w \prod_{i=1}^k y_i-1 \right)$ of the polynomial algebra $R:= \K[x_1, \ldots x_k,\, y_1, \ldots, y_k, \, z,\, w]$ is prime. 
\end{prop}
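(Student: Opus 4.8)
The plan is to realise $R/I$ as a tensor product of two domains, each of which is a localisation of a polynomial ring, and then use that a localisation of a domain is again a domain. Write $X=\{x_1,\ldots,x_k,z\}$ and $Y=\{y_1,\ldots,y_k,w\}$, two disjoint sets of indeterminates, so that $R=\K[X\sqcup Y]$. Put $I_1=\left(z\prod_{i=1}^k x_i-1\right)\triangleleft\K[X]$ and $I_2=\left(w\prod_{i=1}^k y_i-1\right)\triangleleft\K[Y]$. By the general claim established just before Lemma \ref{unouno}, both $I_1$ and $I_2$ are prime; moreover that argument identifies $\K[X]/I_1$ with the localisation $\K[x_1,\ldots,x_k]S_x^{-1}$, where $S_x=\{(\prod_{i=1}^k x_i)^n\mid n\in\N\}$, and similarly $\K[Y]/I_2\cong\K[y_1,\ldots,y_k]S_y^{-1}$ with $S_y=\{(\prod_{i=1}^k y_i)^n\mid n\in\N\}$. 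In particular both factors are domains.

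First I would note that $I=I_1^e+I_2^e$, since the extensions to $R$ of the single generators of $I_1$ and $I_2$ are exactly the two generators of $I$. Applying Lemma \ref{unouno} with this $I_1$, $I_2$ then yields
\[
R/I\cong(\K[X]/I_1)\otimes_\K(\K[Y]/I_2)\cong\K[x_1,\ldots,x_k]S_x^{-1}\otimes_\K\K[y_1,\ldots,y_k]S_y^{-1},
\]
so it suffices to prove that this last tensor product is a domain.

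The key step is to recognise the tensor product of the two localisations as a single localisation. By the standard compatibility of localisation with the tensor product, $\K[x_1,\ldots,x_k]S_x^{-1}\otimes_\K\K[y_1,\ldots,y_k]S_y^{-1}$ is isomorphic to the localisation of
\[
\K[x_1,\ldots,x_k]\otimes_\K\K[y_1,\ldots,y_k]\cong\K[x_1,\ldots,x_k,y_1,\ldots,y_k]
\]
at the multiplicative set generated by the images of $S_x$ and $S_y$. That multiplicative set does not contain $0$, its elements being products of powers of the nonzero polynomials $\prod_{i=1}^k x_i$ and $\prod_{i=1}^k y_i$, so the result is a localisation of the polynomial domain $\K[x_1,\ldots,x_k,y_1,\ldots,y_k]$ and hence is itself a domain. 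Therefore $R/I$ is a domain and $I$ is prime.

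The main obstacle is conceptual rather than computational: one must resist arguing directly that the tensor product of two domains over a field is a domain, which is false in general (for instance $\mathbb{C}\otimes_{\mathbb{R}}\mathbb{C}$ has zero divisors). What rescues the argument over an \emph{arbitrary} field $\K$ is the explicit localisation structure of each factor, which recasts their tensor product as a localisation of an honest polynomial ring, where integrality is immediate.
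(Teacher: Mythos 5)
Your proof is correct, and after the shared first reduction it diverges from the paper's argument in the decisive step. Both you and the paper begin by applying Lemma \ref{unouno} to write $R/I\cong A\otimes B$ with $A=\K[x_1,\ldots,x_k,z]/\bigl(z\prod x_i-1\bigr)$ and $B=\K[y_1,\ldots,y_k,w]/\bigl(w\prod y_i-1\bigr)$, and both must then confront the fact that a tensor product of domains over a non-algebraically-closed field can fail to be a domain. The paper resolves this by base change: it passes to $A_{\overline{\K}}\otimes_{\overline{\K}}B_{\overline{\K}}$, invokes the (Nullstellensatz-flavoured) theorem that the tensor product of finitely generated domains over an algebraically closed field is a domain, and then descends via the embedding $A\otimes B\hookrightarrow (A\otimes B)_{\overline{\K}}$. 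You instead exploit the explicit presentation of each factor as a localisation and use the standard compatibility of localisation with base change to identify $A\otimes B$ with a localisation of $\K[x_1,\ldots,x_k,y_1,\ldots,y_k]$ at the multiplicative set generated by $\prod x_i$ and $\prod y_i$, which is manifestly a domain since that set omits $0$. Your route is more elementary and self-contained --- it needs no algebraic closure and no geometric input --- but it is tailored to the localisation structure of these particular quotients; the paper's route is heavier but would apply verbatim to any pair of finitely generated domains whose scalar extensions to $\overline{\K}$ remain domains. Your closing remark correctly identifies the pitfall (e.g.\ $\mathbb{C}\otimes_{\mathbb{R}}\mathbb{C}$) that both arguments are designed to avoid.
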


\begin{proof} 
Lemma \ref{unouno} yields that $R/I \cong A \otimes B$, where 
$A:= \K[x_1, \ldots, x_k, z]/\big(z\prod x_i-1\big)$ and $B:= \K[y_1, \ldots, y_k, \,w]/\big(w\prod y_i - 1\big)$ are both domains by the considerations above. Moreover, their scalar extensions $A_{\overline{\K}}$ and $B_{\overline{\K}}$, respectively, (where $\overline{\K}$ denotes the algebraic closure of $\K$) are finitely generated domains. Keeping in mind that the tensor product of finitely generated domains over algebraically closed fields is also a domain, we obtain that 
$A_{\overline{\K}} \otimes_{\overline{\K}} B_{\overline{\K}}$ is a domain. From here, using that $(A\otimes B)_{\overline{K}}\cong A_{\overline{\K}}\otimes_{\overline{\K}} B_{\overline{\K}}$ (see \cite[Exercise 2.15]{atiyah}), we obtain that 
$A\otimes B$ is a domain, as required.
\end{proof}

\subsection{Group schemes. The algebraic torus}
In this paper, we will make use of the affine $\K$-group scheme $\Gm$, called the {\bf multiplicative group scheme}; which is represented by the Hopf algebra $\K[x^\pm]$, that is, the Laurent polynomial algebra.
For $k\in\N\setminus\{0\}$, the product group $\Gm^k$ will be called the {\bf $k$-dimensional torus}.
For a finite group $G$, we write $\underline{G}$ to refer to the induced {\bf constant group scheme} for 
$G$ (see \cite[\S2.3, p. 16]{waterhouse}). If $S_n$ denotes the permutation group of 
order $n!$, then the corresponding $\K$-group scheme is $\underline{S_n}$ whose representing Hopf algebra is 
$\K^{n!}$ endowed with a Hopf algebra structure as in \cite[\S2.3, p. 16]{waterhouse}.


\section{Ternary automorphisms on perfect evolution algebras}   \label{sec3}

In this section, $\A$ denotes an $n$-dimensional perfect evolution algebra. We will use the same notation as in \S2.1 for its natural base and its structure matrix. For a triple $(f_1, f_2, f_3)$ of linear bijections on $\A$ we write
\begin{equation} \label{TautP}
f_1(e_i) = \sum_{k \in \Lambda} c_{ki} e_k, 
\quad
f_2(e_i) = \sum_{k \in \Lambda} a_{ki} e_k, 
\quad
f_3(e_i) = \sum_{k \in \Lambda} b_{ki} e_k, 
\end{equation}
for $c_{ki}, a_{ki}, b_{ki} \in \K$. We begin by providing a necessary and sufficient condition for $(f_1, f_2, f_3) \in \taut(\A)$. 

\begin{lemma} \label{necsufTA}
A triple $(f_1, f_2, f_3)$ of bijective linear maps on $\A$ is in $\taut(\A)$ if and only if it satisfies the identity below for  all $i, j, k \in \Lambda$ with $i \neq j$. 
\begin{equation} \label{wssap}
a_{ki} b_{kj} = 0.
\end{equation}
Moreover, $f_1$ is completely determined by $f_2$ and $f_3$.
\end{lemma}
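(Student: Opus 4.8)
The plan is to reduce the functional identity \eqref{Ta} to a finite list of scalar conditions by evaluating it on the natural basis, and then to separate the off-diagonal ($i\neq j$) and diagonal ($i=j$) contributions. By bilinearity of the product and linearity of each $f_t$, the triple $(f_1,f_2,f_3)$ lies in $\taut(\A)$ if and only if $f_1(e_ie_j)=f_2(e_i)f_3(e_j)$ holds for every pair of basis vectors. First I would expand the right-hand side using \eqref{TautP}: since $f_2(e_i)f_3(e_j)=\sum_{k,l}a_{ki}b_{lj}\,e_ke_l$ and \eqref{E2} annihilates every term with $k\neq l$, the double sum collapses to $\sum_k a_{ki}b_{kj}\,(e_ke_k)$, which by \eqref{E1} equals $\sum_l\big(\sum_k w_{lk}a_{ki}b_{kj}\big)e_l$.

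For $i\neq j$ the left-hand side is $f_1(e_ie_j)=f_1(0)=0$ by \eqref{E2}, so the identity becomes $\sum_k w_{lk}a_{ki}b_{kj}=0$ for all $l$; equivalently, the vector $(a_{ki}b_{kj})_{k\in\Lambda}$ lies in the kernel of $\M_\B$. Here perfection enters decisively: since $\A$ is perfect, $\M_\B$ is invertible, forcing this vector to vanish, which is precisely \eqref{wssap}. Conversely, \eqref{wssap} makes the right-hand side identically zero and thus matches the left-hand side, so the off-diagonal equations are equivalent to \eqref{wssap}.

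For $i=j$ the left-hand side expands, via \eqref{E1}, to $f_1\big(\sum_k w_{ki}e_k\big)=\sum_l\big(\sum_k c_{lk}w_{ki}\big)e_l$. Writing $C=(c_{ki})$ for the matrix of $f_1$ and $P=(a_{ki}b_{ki})$ for the entrywise product of the matrices of $f_2$ and $f_3$, the diagonal equations read $C\M_\B=\M_\B P$, whence $C=\M_\B P\,\M_\B^{-1}$ by invertibility of $\M_\B$. This single matrix identity shows that $f_1$ is forced by $f_2$ and $f_3$, which proves the ``moreover'' clause and yields an explicit formula for $f_1$.

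The step I expect to be the main obstacle is the converse: verifying that the map $f_1$ produced by $C=\M_\B P\,\M_\B^{-1}$ is genuinely bijective, as membership in $\taut(\A)$ demands. For this I would extract the structural consequence of \eqref{wssap} for the invertible matrices $(a_{ki})$ and $(b_{ki})$. Since $(b_{ki})$ has no zero row, for each $k$ there is a column $j$ with $b_{kj}\neq 0$; then \eqref{wssap} forces $a_{ki}=0$ for all $i\neq j$, so row $k$ of $(a_{ki})$ is supported at the single column $j$, where invertibility gives $a_{kj}\neq 0$. By symmetry the same holds for $(b_{ki})$, so both are monomial matrices sharing a common support pattern. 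Consequently $P$ is again monomial, hence invertible, and $C=\M_\B P\,\M_\B^{-1}$ is a product of invertible matrices, so $f_1$ is bijective and the converse closes.
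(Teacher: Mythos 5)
Your proof is correct and follows essentially the same route as the paper: evaluate \eqref{Ta} on pairs of basis vectors, use perfection (equivalently, that $\{e_i^2\}$ is a basis, i.e.\ that $\M_\B$ is invertible) to extract \eqref{wssap} from the off-diagonal equations, and read off $f_1$ from the diagonal ones. You additionally verify that the resulting $f_1$ is bijective via the monomial-matrix structure that \eqref{wssap} forces on the matrices of $f_2$ and $f_3$, a point the paper's terse proof leaves implicit.
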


\begin{proof}
Let us begin by noticing that $\{e^2_i\}$ is also a basis of $\A$ (as a vector space over $\K$). Using this, \eqref{wssap} can be proved by applying \eqref{Ta} and \eqref{E1}.
To finish, notice that 
\[
f_1(e_i^2) = f_2(e_i)f_3(e_i) = \left(\sum_{k \in \Lambda} a_{ki} e_k \right)\left(\sum_{\ell \in \Lambda} b_{\ell i} e_\ell \right) = \sum_{k \in \Lambda} a_{ki}b_{ki} e_k,
\]
finishing the proof.
\end{proof}
\subsection{The group scheme of a finite-dimensional evolution algebra}

We write $\alg_\K$ to denote the category of all associative and commutative unital $\K$-algebras, while 
$\grp$ refers to the category of groups. The {\bf group functor of ternary automorphisms} 
of an $n$-dimensional evolution algebra $\A$ is defined as  
$\taffaut(\A): \alg_\K \to \grp$, where $\taffaut(\A)(R):= \taut(\A_R)$ for $\A_R$ the scalar extension 
of $\A$. If $A$ is perfect, taking into account Lemma \ref{necsufTA}, the Hopf algebra representing this group scheme is the 
quotient polynomial algebra 
$\K[x_i^k, \, y_i^k,\, z,\, w]/I$, where $A:=\K[x_i^k,\, y_i^k,\, z,\, w]$ is the polynomial algebra in the indeterminates $\{z, w\}\cup\{x_i^k\}_{i, k \in \Lambda} \cup \{y_i^k\}_{i, k \in \Lambda}$ (where $\Lambda:= \{1, \ldots, n\}$) and $I$ the following ideal:
\begin{equation} \label{Iideal} 
I = \Big( \{x^k_iy^k_j\}_{i \neq j} \cup \{z\det(x_i^j) - 1, \, w\det(y_i^j) - 1\}_{i, j \in \Lambda} \Big).
\end{equation}

Following \cite[\S1.4, p. 7]{Jantzen} we identify $\taffaut(\A)$ with $V(I)$ (see notations therein) so that $V(I)(\K)$ is the zero locus of the ideal $I$. 

For $\sigma \in S_n$ we consider the ideal 
\begin{equation} \label{Jsigma}
J_\sigma = \left(\{x_i^j\}_{j\ne\s(i)} \cup \{y_i^j\}_{j\ne\s(i)} \cup \left\{(-1)^{\vert\s\vert}z\prod_i x_i^{\s(i)}-1, \,(-1)^{\vert\s\vert}w\prod_i y_i^{\s(i)}-1 \right\}\right).
\end{equation} 
It is straightforward to show that $J_\sigma$ is a proper ideal of $A$ for all $\sigma \in S_n$.

\begin{example}
For $n = 2$, the Hopf algebra is $A:= \K[x_1^1,\, x_1^2, \, x_2^1,\, x_2^2,\, y_1^1,\, y_1^2,\, y_2^1,\, y_2^2,\, w,\, z]$ and $I$ is the ideal generated by the following polynomials:
\[
x_1^1y_2^1, \, \, x_2^1y_1^1, \, \, x_1^2y_2^2, \, \, x_2^2y_1^2, \, \, z(x_1^1x_2^2-x_1^2x_2^1) - 1, \, \, w(y_1^1y_2^2-y_1^2y_2^1) - 1.
\]
In this case, we only have two ideals, namely:
\[
J_1 := \Big( x_1^2, \, x_2^1, \, y_1^2, \, y_2^1, \, z x_1^1 x_2^2 - 1, \, w y_1^1y_2^2 - 1 \Big), 
\, \, 
J_2 := \Big( x_1^1, \, x_2^2, \, y_1^1, \, y_2^2, \, z x_1^2 x_2^1 + 1, \, w y_1^2y_2^1 + 1 \Big).
\]
We claim that $J_1$ and $J_2$ are the unique prime ideals containing $I$. In fact, suppose that $P$ is a prime ideal containing $I$. Then either $x_1^1\in P$ or $y_2^1\in P$.
\begin{itemize}
\item[-] If $x_1^1\notin P$, then $y_2^1 \in P$. In particular, $wy_1^1y_2^2 - 1 \in P$, which implies that $y_1^1, y_2^2 \notin P$ and so $y_1^1y_2^2 \notin P$. Moreover, $x_2^1, \, x_1^2 \notin P$, which yields $y_1^2\in P$ and $x_1^1x_2^2\notin P$. Thus $\tiny\begin{pmatrix}y_1^1 & y_1^2\cr y_2^1 & y_2^2\end{pmatrix}\equiv\begin{pmatrix}y_1^1 & 0\cr 0 & y_2^2\end{pmatrix} \pmod{P}$ and $y_1^1y_2^2\notin P$ and $\tiny\begin{pmatrix}x_1^1 & x_1^2\cr x_2^1 &x_2^2\end{pmatrix}\equiv\begin{pmatrix}x_1^1 & 0\cr 0 & y_2^2\end{pmatrix} \pmod{P}$.
\item[-] If $x_1^1\in P$, then $x_1^2, x_2^1\notin P$ and so $x_1^2x_2^1 \notin P$. Moreover, $y_1^1, y_2^2 \in P$, which implies that $y_1^2, y_2^1 \notin P$ and so $y_1^2 y_2^1 \notin P$. From here, we obtain that $x_2^2\in P$. Thus
$\tiny\begin{pmatrix}y_1^1 & y_1^2\cr y_2^1 & y_2^2\end{pmatrix}\equiv\begin{pmatrix}0 & y_1^2\cr y_2^1 & 0\end{pmatrix} \pmod{P}$ and $\tiny\begin{pmatrix}x_1^1 & x_1^2\cr x_2^1 & x_2^2\end{pmatrix}\equiv\begin{pmatrix}0 & x_1^2\cr x_2^1 & 0\end{pmatrix} \pmod{P}$.
\end{itemize} 
To finish, notice that both $J_1$ and $J_2$ are prime by Proposition \ref{unodos}. We claim that $I = J_1 \cap J_2$; in fact, $J_1 + J_2 = A$, since $wy_1^1y_2^2 - 1 \in J_1$ and $wy_1^2y_2^1 + 1\in J_2$. Lastly, one can easily proved that $J_1J_2\subset I$, which imply that $I \subseteq J_1\cap J_2 \subseteq J_1J_2  \subseteq I$, as required.  

\smallskip

Thus, $I = J_1 \cap J_2$, and so scheme theoretically $V(I) = V(J_1)\cup V(J_2)$ 
(see \cite[p. 8]{Jantzen}). The closed subfunctors $V(J_i)$ are the irreducible connected component of $V(I)$. 
In fact, the torus $V(J_1)\cong(\Gm^2)^2$ is the component of the unit and $V(I)/V(J_1)$ the group of components which is isomorphic to the constant finite group associated to $S_2$ the group of permutations two elements.

Summarizing, for a $2$-dimensional perfect evolution algebra $\A$ we have a short exact sequence: 
$(\Gm^2)^2\hookrightarrow\taffaut(\A)\twoheadrightarrow\underline{S_2}.$
Moreover, the Hopf algebra of $\taffaut(\A)$ is the tensor 
product of those of $(\Gm^2)^2$ and $\underline{S_2}$, that is $\K[x^\pm]^{\otimes 4}\otimes K^2$. 
This point deserves a little explanation. The representing Hopf algebra of $\taffaut(\A)$ is 
$$A/I\cong A/J_1\times A/J_2\cong (A/J_1)\otimes \K^2,$$ where the last isomorphism is given by 
$(a + J_1, b + J_2)\mapsto (a + J_1)\otimes u_1+\Omega(b + J_2)\otimes u_2$, for $\{u_1, u_2\}$ the canonical basis of $\K^2$,  
and $\Omega\colon A/J_2\to A/J_1$ the isomorphism induced by the automorphism $A\to A$ such that 
$x_i^j\mapsto x_i^{\s(j)}$, $y_i^j\mapsto y_i^{\s(j)}$,
$z\mapsto -z$, $w\mapsto -w$ (where $\sigma\in S_2$ denotes the transposition $(1 \quad 2)$).

So 
$\taffaut(\A)\cong(\Gm^2)^2\times \underline{S_2}$. Since the Lie functor preserves (fibered) products (see \cite[Chapter 10, \S c, p. 190]{Milne}) we get $\tder(\A)\cong \K^2\times \K^2$. This implies that the ternary derivations are triples $(d_1,d_2,d_3)$ such that in a natural basis $\{e_i\}$ of $\A$, the maps $d_2$ and $d_3$ are diagonal and $d_1(e_i^2)=d_2(e_i)e_i+e_id_3(e_i)$ (so $d_1$ is completely determined by $d_2$ and $d_3$). As a corollary $\tder(A)\cong \D_2(\K)\times \D_2(\K)$, where
$\D_n(\K)$ denotes the space of diagonal $n\times n$ matrices with coefficients in $\K$.
\end{example}

The following result generalises the example above.

\begin{lemma}\label{emirp}
Let $\sigma, \tau \in S_n$ be distinct. Then the following hold:
\begin{itemize}
\item[\rm (i)] $J_\s$, defined as in \eqref{Jsigma}, is prime and contains $I$;
\item[\rm (ii)] $J_\s$ and $J_\t$ are coprime;
\item[\rm (iii)] $J_\s = \Big( \{x_i^j\}_{j\ne\s(i)} \cup  \{y_i^j\}_{j\ne\s(i)} \cup \{z\det(x_i^j)-1, \, w\det(y_i^j)-1\}_{i, j \in \Lambda}\Big)$.
\end{itemize}
\end{lemma}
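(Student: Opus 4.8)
The plan is to derive all three statements from a single elementary computation: the reduction of the determinant $\det(x_i^j)$ modulo the linear generators of $J_\s$. Viewing $(x_i^j)$ as an $n\times n$ matrix of indeterminates, its determinant is the polynomial $\det(x_i^j)=\sum_{\pi\in S_n}(-1)^{|\pi|}\prod_i x_i^{\pi(i)}$. Working modulo the ideal generated by $\{x_i^j\}_{j\neq\s(i)}$, every term indexed by $\pi\neq\s$ contains a factor $x_i^{\pi(i)}$ with $\pi(i)\neq\s(i)$ for some $i$, hence vanishes; thus $\det(x_i^j)\equiv(-1)^{|\s|}\prod_i x_i^{\s(i)}$, and symmetrically $\det(y_i^j)\equiv(-1)^{|\s|}\prod_i y_i^{\s(i)}$ modulo $(\{y_i^j\}_{j\neq\s(i)})$. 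Everything below rests on this congruence.

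First I would prove (iii). The two candidate generating sets for $J_\s$ share the linear generators $\{x_i^j\}_{j\neq\s(i)}\cup\{y_i^j\}_{j\neq\s(i)}$ and differ only in the last pair. By the congruence, $z\det(x_i^j)-1$ and $(-1)^{|\s|}z\prod_i x_i^{\s(i)}-1$ differ by $z\big(\det(x_i^j)-(-1)^{|\s|}\prod_i x_i^{\s(i)}\big)$, which lies in the linear ideal, and likewise for the $w$-generators. Hence the two generating sets define the same ideal, giving (iii).

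Next, (i). The containment $I\subseteq J_\s$ splits into two checks. The determinant generators $z\det(x_i^j)-1$ and $w\det(y_i^j)-1$ of $I$ lie in $J_\s$ by (iii). For a monomial generator $x_i^k y_j^k$ with $i\neq j$: if $k\neq\s(i)$ then $x_i^k\in J_\s$; otherwise $k=\s(i)$, and since $\s$ is a bijection and $j\neq i$ we get $\s(j)\neq k$, so $y_j^k\in J_\s$; in either case the product lies in $J_\s$. For primeness I would compute the quotient. Killing the $2(n^2-n)$ linear generators identifies $A/(\{x_i^j\}_{j\neq\s(i)}\cup\{y_i^j\}_{j\neq\s(i)})$ with the polynomial ring in the surviving variables $x_i^{\s(i)},\,y_i^{\s(i)}$ ($i\in\Lambda$) together with $z,w$; there the remaining two generators become $(-1)^{|\s|}z\prod_i x_i^{\s(i)}-1$ and $(-1)^{|\s|}w\prod_i y_i^{\s(i)}-1$. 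Absorbing the sign $(-1)^{|\s|}$ into $z$ and $w$ (a ring automorphism) turns this into exactly the ideal shown prime in Proposition \ref{unodos} (note $n\geq 2$, since $S_n$ must contain two distinct elements), so $A/J_\s$ is a domain and $J_\s$ is prime.

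Finally, (ii). As $\s\neq\t$, choose $i_0$ with $\s(i_0)\neq\t(i_0)$. Then $x_{i_0}^{\s(i_0)}\in J_\t$, since $J_\t$ contains $x_{i_0}^j$ for every $j\neq\t(i_0)$; consequently the monomial $\prod_i x_i^{\s(i)}$, which has $x_{i_0}^{\s(i_0)}$ as a factor, lies in $J_\t$. Combining this with $(-1)^{|\s|}z\prod_i x_i^{\s(i)}-1\in J_\s$ yields $1=(-1)^{|\s|}z\prod_i x_i^{\s(i)}-\big((-1)^{|\s|}z\prod_i x_i^{\s(i)}-1\big)\in J_\t+J_\s$, so $J_\s+J_\t=A$ and $J_\s,J_\t$ are coprime. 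The only genuinely substantial step is the primeness reduction in (i), where the entire difficulty is concentrated in correctly invoking Proposition \ref{unodos}; (iii) and the remaining parts are bookkeeping once the determinant congruence is established.
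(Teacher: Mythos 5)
Your proof is correct and follows essentially the same route as the paper: the same determinant reduction modulo the linear generators yields (iii) and the containment $I\subseteq J_\s$, primeness comes from computing the quotient and invoking Proposition \ref{unodos}, and coprimality comes from exhibiting $\prod_i x_i^{\s(i)}\in J_\t$. The only cosmetic differences are that you prove (iii) first and reuse it, and you absorb the sign $(-1)^{|\s|}$ into $z,w$ for a general $\s$ where the paper instead reduces to $J_1$ via the permutation-induced automorphism of $A$.
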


\begin{proof}
(i) We begin by proving that $I \subseteq J_\sigma$. Notice that any element of the form $x^k_{\s(i)}y^k_j$, for $j \neq i$ is in $J_\s$; in fact, if $k = \s(i)$, then $k \neq \s(j)$ and so $y^{k}_j \in J_\sigma$. We prove here that $z\det(x_i^j) - 1 \in J_\sigma$; the proof of $w\det(y_i^j) - 1 \in J_\s$ is similar and we leave it to the reader. To do so, notice that 
\[
z\det(x_i^j)-1=
\left((-1)^{\vert\s\vert} z\prod_i x_i^{\s(i)} - 1\right) + \sum_{\tau\ne\s}(-1)^{\vert\tau\vert}z\prod_i x_i^{\tau(i)}.
\]
It remains to show that $\sum_{\tau\ne\s}(-1)^{\vert\tau\vert}z\prod_i x_i^{\tau(i)} \in J_\sigma$. This follows from $\tau\ne\s$. In fact, since $\tau\ne\s$ there exists $i$ such that $\tau(i)\ne\s(i)$ and so $x_i^{\tau(i)}\in J_\s$.

\smallskip

Notice that it is enough to prove that $J_1$ is prime; in fact, any $\s\in S_n$ induces an automorphism of $A$ fixing $z$ and $w$ and such that $x_i^k\mapsto x_{\s(i)}^{\s(k)}$ and $y_i^k\mapsto y_{\s(i)}^{\s(k)}$. Such automorphisms permute the ideals $J_\sigma$. To prove that $J_1$ is prime we will show that $A/J_1$ is a domain, which follows from Lemma \ref{unodos}: 
\[
A/J_1 \cong K[x_{11}, \ldots, x_{nn}, \, y_{11},\ldots, y_{nn},\, z, \, w]/\big(z\prod x_{ii} - 1, \, w\prod y_{ii} - 1\big).
\]

\noindent (ii) Consider the subsets $S = \{k \in \Lambda \mid \s(k) = \t(k)\}$ and $T := \Lambda \setminus S$. Notice that $T \neq \emptyset$ since we are assuming that $\s \neq \t$. Using that 
$(-1)^{\vert\s\vert}z\prod_{k\in A} x_k^{\s(k)}\prod_{k\in B} x_k^{\s(k)} - 1 \in J_\sigma$ 
and $\prod_{k\in B} x_k^{\s(k)} \in J_\tau$ we obtain that $1 \in  J_\s + J_\t$, as required.

\smallskip

\noindent (iii) Write $z\det(x_i^j) - 1 = z\sum_\t d_\t - 1$, for $d_\t = (-1)^{\vert\t\vert}\prod_i x_i^{\t(i)}$. Notice that 
$d_\t \in J_\s$ for all $\t \neq \s$, since in such a case $\s(\ell) \neq \t(\ell)$ for some $\ell$ and so the factor $x_\ell^{\t(\ell)} \in J_\s$. Thus: $z\det(x_i^j) - 1 \equiv z d_\s-1 = (-1)^{\vert\s\vert}z\prod_i x_i^{\s(i)} \pmod{K_\sigma}$, for $K_\sigma = \langle \{x_i^j\}_{j\ne\s(i)}, \{y_i^j\}_{j\ne\s(i)} \rangle$. Similarly, one can prove that $w\det(y_i^j)-1\equiv (-1)^{\vert\s\vert}w\prod_i y_i^{\s(i)} \pmod{K_\sigma},$ which finishes the proof.
\end{proof}

\begin{lemma} \label{lemaunido}
For any $\t \in S_n$ choose an indeterminate $x_{i_\t}^{j_\t}$ such that $j_\t\ne \t(i_\t)$. Then the following assertions hold.
\begin{itemize}
\item[\rm (i)] $y_1^{\s(1)}\ldots y_n^{\s(n)}\prod_{\t\in S_n} x_{i_\t}^{j_\t}\in I$ for all $\sigma \in S_n;$
\item[\rm (ii)] $\prod_{\t\in S_n} x_{i_\t}^{j_\t} \in I;$
\item[\rm (iii)] $x_i^j y_1^{\s(1)}\cdots y_n^{\s(n)}\in I$ for all $\s \in S_n$ and $j \neq \s(i)$.
\end{itemize}
\end{lemma}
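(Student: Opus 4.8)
The plan is to regard part (iii) as the fundamental building block: part (i) is an immediate consequence of it, and part (ii) follows from (i) by a determinant manipulation. I would therefore prove the three items in the order (iii), (i), (ii), rather than as listed. The only tools needed throughout are the generators $x_a^k y_b^k$ (with $a \neq b$) of $I$, together with the relation $w\det(y_i^j) - 1 \in I$.

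For (iii), I would look inside the product $y_1^{\sigma(1)}\cdots y_n^{\sigma(n)} = \prod_{b} y_b^{\sigma(b)}$ for the unique factor carrying the superscript $j$, namely $y_{\sigma^{-1}(j)}^{j}$ (obtained by taking $b = \sigma^{-1}(j)$). Pairing it with $x_i^j$ produces the monomial $x_i^{j}\, y_{\sigma^{-1}(j)}^{j}$, which has the shape $x_a^k y_b^k$ with $a = i$, $b = \sigma^{-1}(j)$ and common superscript $k = j$. The hypothesis $j \neq \sigma(i)$ is exactly $i \neq \sigma^{-1}(j)$, i.e. $a \neq b$; hence this monomial is one of the generators of $I$, and the whole product $x_i^j y_1^{\sigma(1)}\cdots y_n^{\sigma(n)}$, which contains it as a factor, lies in $I$.

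With (iii) in hand, (i) is immediate: fixing $\sigma$, the product $\prod_{\tau \in S_n} x_{i_\tau}^{j_\tau}$ contains the factor indexed by $\tau = \sigma$, namely $x_{i_\sigma}^{j_\sigma}$ with $j_\sigma \neq \sigma(i_\sigma)$; applying (iii) with $(i,j) = (i_\sigma, j_\sigma)$ gives $x_{i_\sigma}^{j_\sigma} y_1^{\sigma(1)}\cdots y_n^{\sigma(n)} \in I$, and multiplying by the remaining $x$-factors yields (i). For (ii), I would multiply by the unit $w\det(y_i^j) \equiv 1 \pmod I$: since $w\det(y_i^j) - 1 \in I$, we have $\prod_\tau x_{i_\tau}^{j_\tau} \equiv w\det(y_i^j)\prod_\tau x_{i_\tau}^{j_\tau} \pmod I$. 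Expanding $\det(y_i^j) = \sum_{\sigma \in S_n} (-1)^{\vert\sigma\vert} y_1^{\sigma(1)}\cdots y_n^{\sigma(n)}$ and invoking (i) for each $\sigma$ shows the right-hand side lies in $I$, whence $\prod_\tau x_{i_\tau}^{j_\tau} \in I$.

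The computations here are light, so I do not anticipate a serious obstacle; the two points demanding care are the index bookkeeping in (iii) — locating the factor $y_{\sigma^{-1}(j)}^{j}$ and matching the inequality $j \neq \sigma(i)$ to the defining condition $a \neq b$ of the generators — and the idea underlying (ii), where multiplying by $w\det(y_i^j) \equiv 1$ manufactures, from a pure monomial in the $x$'s, precisely the $y$-monomials that trigger (i). Keeping the permutation indices straight is really the whole content of the argument.
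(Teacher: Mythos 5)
Your proof is correct and follows essentially the same route as the paper: the heart of the matter in each part is locating a quadratic generator $x_a^k y_b^k$ (same superscript, distinct subscripts) inside the monomial --- in (iii) this is $x_i^j\,y_{\sigma^{-1}(j)}^{j}$, with $i\neq\sigma^{-1}(j)$ exactly because $j\neq\sigma(i)$ --- and (ii) is obtained by multiplying by $w\det(y_i^j)\equiv 1 \pmod I$ and expanding the determinant. The only difference is organisational: you prove (iii) first and deduce (i) from it, whereas the paper argues (i) directly by exhibiting such a generator inside the product; your ordering is slightly tidier but the underlying argument is the same.
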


\begin{proof}
(i) Notice that $y_1^{\s(1)}\ldots y_n^{\s(n)}\prod_{\t\in S_n} x_{i_t}^{j_t} =
y_1^{\s(1)}\ldots y_n^{\s(n)}x_{i_\s}^{j_\s}\prod_{\t\ne\s} x_{i_t}^{j_t}$. Then $x_{i_\s}^{j_\s}y_{i_\s}^{\s(i_\s)}$
is in $I$, since $j_\s\ne \s(i_\s)$ by hypothesis, and (i) follows.

\smallskip

\noindent (ii) follows from (i) since $\left(w\det(y_i^j)-1\right)\prod_{\t\in S_n} x_{i_t}^{j_t} \in I.$

\smallskip

\noindent (iii) Let $k \in \Lambda$ such that $j = \s(k)$. 
Then $x_i^j y_k^{\s(k)} \in I$, since $j=\s(k)$ and $i\ne k$, which implies that $x_i^j y_k^{\s(k)} \prod_{q\ne k}y_q^{\s(q)}\in I$. 
\end{proof}

\begin{lemma}\label{oetse}
Let $S_n = S \sqcup T$ be a partition such that both $S$ and $T$ are non-empty. Then the element $v: = \prod_{\t\in S}x_{i_\t}^{j_\t}\prod_{\s\in T}y_{k_\s}^{q_\s}$, where $j_\t\ne\t(i_\t)$ and $q_\s\ne\s(k_\s)$ for all  
$\t\in S$ and $\s\in T$, belongs to $I$. 
\end{lemma}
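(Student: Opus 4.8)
The plan is to exploit the fact that the generators $z\det(x_i^j)-1$ and $w\det(y_i^j)-1$ of $I$ make $z\det(x)$ and $w\det(y)$ congruent to $1$ modulo $I$; here I abbreviate $\det(x):=\det(x_i^j)$ and $\det(y):=\det(y_i^j)$. Setting $u:=\big(z\det(x)\big)\big(w\det(y)\big)$ we have $u\equiv 1\pmod I$, so that $v\equiv uv\pmod I$ and it suffices to prove $uv\in I$. Expanding both determinants by the Leibniz formula, $uv$ becomes a $zw$-multiple of a signed sum of monomials
\[
M_{\rho,\pi}=\Big(\prod_m x_m^{\rho(m)}\Big)\Big(\prod_m y_m^{\pi(m)}\Big)\prod_{\t\in S}x_{i_\t}^{j_\t}\prod_{\s\in T}y_{k_\s}^{q_\s},
\]
indexed by pairs $(\rho,\pi)\in S_n\times S_n$ (the signs $(-1)^{|\rho|+|\pi|}$ will be irrelevant). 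Since $I$ is an ideal, it is enough to show that every such $M_{\rho,\pi}$ lies in $I$, which I would do by exhibiting inside each monomial a factor of the form $x_a^k y_b^k$ with $a\ne b$, i.e. a generator of $I$.

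First I would dispose of the off-diagonal terms $\rho\ne\pi$. In the full product $\prod_m x_m^{\rho(m)}$ the unique factor carrying upper index $k$ is $x_{\rho^{-1}(k)}^{k}$, and likewise the unique factor of $\prod_m y_m^{\pi(m)}$ with upper index $k$ is $y_{\pi^{-1}(k)}^{k}$. As $\rho\ne\pi$ forces $\rho^{-1}\ne\pi^{-1}$, there is some $k$ with $\rho^{-1}(k)\ne\pi^{-1}(k)$; then $x_{\rho^{-1}(k)}^{k}y_{\pi^{-1}(k)}^{k}$ is a generator of $I$ dividing $M_{\rho,\pi}$, so $M_{\rho,\pi}\in I$.

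The only remaining, and genuinely delicate, terms are the diagonal ones $\rho=\pi$; this is where the hypothesis $S\sqcup T=S_n$ is used, and it is the crux of the argument. Because $\rho\in S_n$, either $\rho\in S$ or $\rho\in T$. If $\rho\in S$, then $M_{\rho,\rho}$ contains the chosen factor $x_{i_\rho}^{j_\rho}$ with $j_\rho\ne\rho(i_\rho)$, together with the factor $y_{\rho^{-1}(j_\rho)}^{j_\rho}$ coming from $\prod_m y_m^{\rho(m)}$; since $j_\rho\ne\rho(i_\rho)$ gives $\rho^{-1}(j_\rho)\ne i_\rho$, the product $x_{i_\rho}^{j_\rho}y_{\rho^{-1}(j_\rho)}^{j_\rho}$ is again a generator of $I$. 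Symmetrically, if $\rho\in T$ one pairs the chosen factor $y_{k_\rho}^{q_\rho}$ (with $q_\rho\ne\rho(k_\rho)$) against the factor $x_{\rho^{-1}(q_\rho)}^{q_\rho}$ of $\prod_m x_m^{\rho(m)}$ to produce a generator. Either way $M_{\rho,\rho}\in I$, and summing over all $(\rho,\pi)$ gives $uv\in I$, whence $v\in I$.

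The main obstacle is precisely the diagonal terms: multiplying $v$ by only one of the two determinant relations leaves diagonal terms that cannot be defeated, and the decisive idea is to multiply by \emph{both} relations at once, so that the off-diagonal terms are eliminated automatically (distinct permutations must disagree on some upper index) while each surviving diagonal permutation $\rho$ is neutralised by the fact that it belongs to $S$ or to $T$, producing the needed wrong-index factor. It is worth double-checking the bookkeeping — that each upper index occurs exactly once in a full Leibniz monomial, so that the pairing of $x_{\rho^{-1}(k)}^k$ with $y_{\pi^{-1}(k)}^k$ is unambiguous — but no serious difficulty arises there; note also that this route uses only the defining generators of $I$ and not the auxiliary Lemma \ref{lemaunido}.
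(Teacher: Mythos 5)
Your proof is correct. The overall strategy coincides with the paper's: both arguments multiply $v$ by the two determinant relations, so that $v \equiv vzw\det(x_i^j)\det(y_i^j) \pmod I$, and then kill every monomial of the double Leibniz expansion by locating inside it a generator $x_a^k y_b^k$ with $a \ne b$. Where you differ is in how the monomials are partitioned. The paper splits $\det(x_i^j) = x_S + x_T$ and $\det(y_i^j) = y_S + y_T$ according to the given partition $S_n = S \sqcup T$, disposes of the blocks involving $y_S$ or $x_T$ wholesale by pairing a chosen factor of $v$ against a full determinant monomial (this is Lemma \ref{lemaunido}(iii) and its $x$--$y$ symmetric counterpart), and reserves the permutation-mismatch argument for the remaining block $x_S y_T$, where $\sigma \ne \tau$ is automatic. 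You instead split into off-diagonal pairs $\rho \ne \pi$, killed by the mismatch of lower indices at some common upper index, and diagonal pairs $\rho = \pi$, killed by pairing the chosen factor of $v$ indexed by $\rho$ (which exists precisely because $S \sqcup T = S_n$) against the matching factor of the opposite determinant monomial. Your partition is arguably cleaner: it makes the role of the hypothesis that $S$ and $T$ cover $S_n$ completely transparent, treats the two letters symmetrically, and needs only the defining generators of $I$ rather than the auxiliary Lemma \ref{lemaunido}; the paper's version, in exchange, reuses machinery it has already established. The two local computations underlying both proofs are identical.
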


\begin{proof}
Let $\rho\in S_n$ and $v_\rho:= v y_1^{\rho(1)} \ldots y_n^{\rho(n)}$. If $\rho\in S$ then $\xi:=x_{i_\rho}^{j_\rho}y_1^{\rho(1)}\cdots y_n^{\rho(n)}$ is a factor of $v_\rho$ and belongs to $I$ by Lemma \ref{lemaunido} (iii). From here we obtain that $v_\rho \in I$. Similarly, one can show that $v x_1^{\rho(1)} \ldots x_n^{\rho(n)} \in I$, provided that $\rho \in T$. On the other hand, we have that $\det(x_i^j) = x_S + x_T$, 
where $x_S = \sum_{\s\in S}(-1)^{\vert\s\vert}x_1^{\s(1)}\cdots x_n^{\s(n)}$ and $x_T = \det(x_i^j)- x_S$. Similarly, $\det(y_i^j) = y_S + y_T$, where $y_S = \sum_{\s\in S}(-1)^{\vert\s\vert}x_1^{\s(1)}\cdots x_n^{\s(n)}$ and $y_T = \det(y_i^j)-y_S$. We have proved that $vy_S, vx_T \in I$, which yield that 
\[
v\det(x_i^j)\det(y_i^j) = v x_Sy_S + vx_Sy_T + vx_Ty_S + vx_Ty_T \equiv vx_Sy_T \pmod{I}
\]
We focus our argument on $x_Sy_T$, which is a linear combinations of
elements of the form $x_1^{\s(1)}\cdots x_n^{\s(n)}y_1^{\t(1)}\cdots y_n^{\t(n)}$ with $\s\in S$ and $\t\in T$. We claim 
that $x_1^{\s(1)}\cdots x_n^{\s(n)}y_1^{\t(1)}\cdots y_n^{\t(n)}$ are all in $I$. In fact, consider $x_1^{\s(1)}$. Then there is some $j$ such that $\s(1)=\t(j)$. If $j\ne 1$, then $x_1^{\s(1)}y_j^{\t(j)}\in I$ and we are done. Otherwise $\s(1) = \t(1)$. Consider now $x_2^{\s(2)}$; there must be some $j$ such that $\s(2)=\t(j)$. Thus $x_2^{\s(2)}y_j^{\t(j)}\in I$ unless $j = 2$. In this case we have $\s(2)=\t(2)$. Proceeding in this way, given that $\s\ne\t$ we can find some $i$ such that
$\s(k)=\t(k)$ for $1\le k\le i-1$ and $\s(i)\ne \t(i)$. Again
$\s(i)=t(j)$ for some $j$. Thus $x_i^{\s(i)}y_j^{\t(j)}\in I$ unless $i=j$, which is impossible since this would imply that
$\s(i)=\t(i)$. Summarizing we have proved that $v\det(x_i^j)\det(y_i^j)\in I$. On the other hand, from  
$\big(z\det(x_i^j)-1\big)\big(w\det(y_i^j)-1\big) \in I$ we obtain that 
\begin{align*}
v\big(z\det(x_i^j) - 1\big)\big(w\det(y_i^j) - 1\big) &= vzw\det(x_i^j)\det(y_i^j)-vz\det(x_i^j)-vw\det(y_i^j)+v
\\
& \equiv -v-v+v=-v \pmod{I},
\end{align*}
which finishes the proof.
\end{proof}

\begin{lemma} \label{idealsProduct}
    $\prod_{\s\in S_n}J_\s=I$
\end{lemma}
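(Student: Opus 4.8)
The plan is to establish the two inclusions $I \subseteq \prod_{\sigma} J_\sigma$ and $\prod_{\sigma} J_\sigma \subseteq I$ separately, exploiting that the factors $J_\sigma$ are pairwise coprime and all contain $I$. For the first inclusion I would invoke the elementary fact that a finite family of pairwise coprime ideals in a commutative ring satisfies $\prod_{\sigma} J_\sigma = \bigcap_{\sigma} J_\sigma$ (the product-equals-intersection form of the Chinese Remainder Theorem, proved by induction on the number of factors, using that $J_\sigma$ remains coprime with the product of the remaining $J_\tau$). By Lemma \ref{emirp}(ii) the ideals $\{J_\sigma\}_{\sigma \in S_n}$ are pairwise coprime, so $\prod_{\sigma} J_\sigma = \bigcap_{\sigma} J_\sigma$; and since $I \subseteq J_\sigma$ for every $\sigma$ by Lemma \ref{emirp}(i), it follows that $I \subseteq \bigcap_{\sigma} J_\sigma = \prod_{\sigma} J_\sigma$.

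For the reverse inclusion I would verify $\prod_{\sigma} J_\sigma \subseteq I$ on generators. Using the presentation of $J_\sigma$ from Lemma \ref{emirp}(iii), the product ideal is generated by the elements $\prod_{\sigma \in S_n} g_\sigma$, where each $g_\sigma$ is one of $x_i^j$ with $j \neq \sigma(i)$, $y_i^j$ with $j \neq \sigma(i)$, $z\det(x_i^j) - 1$, or $w\det(y_i^j) - 1$. The two determinant generators already lie in $I$ by \eqref{Iideal}, so whenever some factor $g_\sigma$ is one of them the entire product is in $I$; hence I may assume that for every $\sigma$ the chosen generator $g_\sigma$ is a monomial, namely $x_{i_\sigma}^{j_\sigma}$ with $j_\sigma \neq \sigma(i_\sigma)$ or $y_{k_\sigma}^{q_\sigma}$ with $q_\sigma \neq \sigma(k_\sigma)$.

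Finally I would partition $S_n = S \sqcup T$, letting $S$ consist of those $\sigma$ for which $g_\sigma$ is an $x$-monomial and $T$ of those for which it is a $y$-monomial, so that $\prod_{\sigma} g_\sigma = \prod_{\sigma \in S} x_{i_\sigma}^{j_\sigma} \prod_{\sigma \in T} y_{k_\sigma}^{q_\sigma}$. When both $S$ and $T$ are non-empty this is precisely the element shown to belong to $I$ in Lemma \ref{oetse}; when $T = \emptyset$ the product equals $\prod_{\sigma \in S_n} x_{i_\sigma}^{j_\sigma} \in I$ by Lemma \ref{lemaunido}(ii); and when $S = \emptyset$ the product is $\prod_{\sigma \in S_n} y_{k_\sigma}^{q_\sigma}$, which lies in $I$ by the $y$-analogue of Lemma \ref{lemaunido}(ii), valid because $I$ is invariant under the substitution exchanging $x_i^j \leftrightarrow y_i^j$ and $z \leftrightarrow w$. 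In all cases $\prod_{\sigma} g_\sigma \in I$, so $\prod_{\sigma} J_\sigma \subseteq I$, and together with the first inclusion this gives $\prod_{\sigma} J_\sigma = I$ (and incidentally the primary decomposition $I = \bigcap_{\sigma} J_\sigma$). The genuine content sits in Lemmas \ref{lemaunido} and \ref{oetse}; given those, the only points requiring care — and the main, if modest, obstacle — are the reduction to monomial generators and the exhaustiveness of the $S \sqcup T$ case split.
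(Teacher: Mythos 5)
Your proof is correct and follows essentially the same route as the paper: product equals intersection by pairwise coprimality for $I \subseteq \prod_\sigma J_\sigma$, and a generator-by-generator check via Lemmas \ref{lemaunido} and \ref{oetse} for the reverse inclusion. In fact you are slightly more careful than the paper, which cites only Lemma \ref{oetse} for the all-monomial case and does not separately address the degenerate partitions $S = \emptyset$ or $T = \emptyset$, which you correctly dispatch with Lemma \ref{lemaunido}(ii) and its $y$-analogue.
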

\begin{proof} 
Lemma \ref{emirp} (ii) tells us that the ideals $J_\s$ are pairwise coprime, which implies that $\prod_\s J_\s = \cap_\s J_\s$.
Moreover, from Lemma \ref{emirp} (i) we get that $I\subseteq \cap_\s J_s$. We claim that $\prod_\s J_\s\subset I$. In fact, 
write $J_s$ as in Lemma \ref{emirp} (iii), and consider the element $\prod_\s a_\s$, where each $a_\s\in \{x_i^j\}_{j\ne\s(i)}\cup \{y_i^j\}_{j\ne\s(i)}\cup \{z\det(x_i^j) - 1, w\det(y_i^j) - 1\}$ is a generator of $J_\s$. If either $a_\s = z\det(x_i^j) - 1$ or $a_\s =  w\det(y_i^j) - 1$ for some $\s$, then we are done since both of these elements are in $I$.
To finish, if $a_\s \in \{x_i^j\}_{j\ne\s(i)}\cup \{y_i^j\}_{j\ne\s(i)}$ for all $\s$, then the result follows from Lemma \ref{oetse}.
\end{proof}

Given a permutation $\s\in S_n$, the automorphism $A\to A$ induced by 
$x_i^j\mapsto x_i^{\s(j)}$, $y_i^j\mapsto y_i^{\s(j)}$, $z\mapsto (-1)^{\vert\s\vert}z$, 
$w\mapsto (-1)^{\vert\s\vert}w$
maps the ideal $J_1$ to $J_\s$ and so provides an isomorphism $\Omega_\s\colon A/J_\s\to A/J_1$.
We are in a position now to prove the main result of this section.

\begin{theorem} \label{TAUT}
Let $\A$ be an $n$-dimensional perfect evolution algebra over a field $\K$. The identity component of the affine group scheme of ternary automorphisms $\taffaut(\A)$ of $\A$ is isomorphic to the torus $\Gm^{2n}$. Moreover: $\taffaut(\A)\cong\Gm^{2n}\times \underline{S_n}$.  
\end{theorem}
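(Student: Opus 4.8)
The plan is to move everything to the representing Hopf algebra $A/I$ and exploit the decomposition of $I$ furnished by the preceding lemmas, handling both assertions at once. Lemma \ref{emirp}(ii) shows the ideals $J_\sigma$ are pairwise coprime, and Lemma \ref{idealsProduct} gives $\prod_\sigma J_\sigma = I$; pairwise coprimality forces $\prod_\sigma J_\sigma = \bigcap_\sigma J_\sigma$, so the Chinese Remainder Theorem yields a $\K$-algebra isomorphism
\[
A/I \;\cong\; \prod_{\sigma \in S_n} A/J_\sigma .
\]
Geometrically this reads $V(I) = \bigsqcup_\sigma V(J_\sigma)$, the union being disjoint because $V(J_\sigma)\cap V(J_\tau)=V(J_\sigma+J_\tau)=\emptyset$ by coprimality, and each $V(J_\sigma)$ is irreducible since $J_\sigma$ is prime (Lemma \ref{emirp}(i)). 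Thus the $V(J_\sigma)$ are precisely the connected components of $\taffaut(\A)$, and there are $n!$ of them.

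For the identity component I would first locate the unit: the triple $(\id,\id,\id)$ corresponds to $f_2=f_3=\id$, i.e. to the point where $(x_i^j)$ and $(y_i^j)$ are identity matrices, which lies on $V(J_1)$ (the identity-permutation component). Hence the identity component is $V(J_1)$. To identify it as a torus I would use the description of $A/J_1$ obtained inside the proof of Lemma \ref{emirp}(i): killing all off-diagonal $x_i^j,y_i^j$ and imposing $z\prod_i x_i^i=1$, $w\prod_i y_i^i=1$ gives
\[
A/J_1 \;\cong\; \K[x_1^1,\ldots,x_n^n,y_1^1,\ldots,y_n^n,z,w]\big/\left(z\prod_i x_i^i-1,\ w\prod_i y_i^i-1\right).
\]
Inverting the product of a set of variables inverts each one ($(x_i^i)^{-1}=z\prod_{j\ne i}x_j^j$), so this is the Laurent algebra $\K[x^\pm]^{\otimes 2n}=\mathcal{O}(\Gm^{2n})$. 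On this component $f_2,f_3$ are genuinely diagonal, so composition is the coordinatewise product of their $2n$ diagonal entries, matching the comultiplication of $\Gm^{2n}$; this shows the identity component is $\Gm^{2n}$ as a group scheme, the first assertion.

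For the full decomposition I would combine the two steps. Each $\Omega_\sigma\colon A/J_\sigma\to A/J_1$ is an isomorphism, whence
\[
A/I \;\cong\; \prod_{\sigma\in S_n} A/J_1 \;\cong\; \mathcal{O}(\Gm^{2n})\otimes \K^{n!},
\]
and since $\K^{n!}$ is exactly the Hopf algebra representing $\underline{S_n}$, the right-hand side is the coordinate algebra of $\Gm^{2n}\times\underline{S_n}$. This reproduces the $n=2$ example uniformly in $\sigma$, giving the scheme-level isomorphism $\taffaut(\A)\cong\Gm^{2n}\times\underline{S_n}$.

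The main obstacle, and where I would concentrate the work, is upgrading this algebra-level (equivalently scheme-level) isomorphism to an isomorphism of \emph{group} schemes, i.e. checking that the comultiplication respects the product structure rather than being twisted. Concretely, the quotient map $\taffaut(\A)\to\underline{S_n}$ should send a monomial pair $(f_2,f_3)$ to its common underlying permutation (well defined by Lemma \ref{necsufTA}), while the permutation matrices provide a section $\underline{S_n}\to\taffaut(\A)$; the delicate point is the resulting $S_n$-action on the $2n$ diagonal entries, which is precisely the reindexing $x_i^j\mapsto x_i^{\sigma(j)}$, $z\mapsto(-1)^{|\sigma|}z$ encoded by the automorphisms $\Omega_\sigma$. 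One must verify, by comparing the comultiplication on $\mathcal{O}(\Gm^{2n})\otimes\K^{n!}$ with the composition law of $\taut(\A_R)$ for every $R$, that this reindexing is compatible with the claimed product decomposition; once that compatibility is established the stated isomorphism $\taffaut(\A)\cong\Gm^{2n}\times\underline{S_n}$ follows, and applying the Lie functor then recovers $\tder(\A)$.
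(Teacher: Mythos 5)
Your proposal is correct and follows essentially the same route as the paper's own proof: both decompose $I$ as the product (equivalently, by pairwise coprimality, the intersection) of the prime ideals $J_\sigma$, identify the identity component with $V(J_1)$ whose coordinate ring is the Laurent algebra $\K[x^\pm]^{\otimes 2n}$, and use the isomorphisms $\Omega_\sigma$ to obtain $A/I\cong A/J_1\otimes\K^{n!}$. The only difference is that you explicitly flag the need to check that the Chinese-Remainder/tensor decomposition is compatible with the comultiplication (i.e.\ that the isomorphism is one of group schemes, not merely of schemes); the paper asserts this without verification, so on this point your account is, if anything, the more careful one.
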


\begin{proof}
From \cite[{\bf 1.4}, p. 7]{Jantzen} we have $\taffaut(\A)\cong V(I)$. Moreover, Lemma \ref{idealsProduct} tells us that 
$I=\prod_\s J_\s$, and so we have, scheme-theoretically, $V(I) = \cup_{\s\in S_n} V(J_\s)$. This 
implies that  $V(I)(D) = \cup_{\s\in S_n} V(J_\s)(D)$, for any domain $D\in\alg_\K$. The connected component of the unit is the affine group scheme 
$\taffaut_0(\A) = V(J_1)$, whose representing Hopf algebra is $\mathcal{H}:=K[x_i^k,y_i^k,z,w]/J_1$ and $J_1 = \Big(\{x_{ij}\}_{i\neq j}\cup\{y_{ij}\}_{i\neq j}\cup\{z\det(x_i^j)-1,w\det(y_i^j)-1\}_{i, j\in \Lambda}\Big)$ by Lemma \ref{emirp} (iii). Therefore:
\[
\mathcal{H}\cong K[x_{ii},y_{ii},z,w]/(z\prod_i x_{ii}-1,w\prod_i y_{ii}-1)\cong \otimes_i \left(K[x_{ii}^\pm]\otimes K[y_{ii}^\pm]\right)\cong K[x^\pm]^{\otimes 2n},
\]
which yields that $\taffaut_0(\A)=\Gm^{2n}$. Furthermore, $A/I = A/\prod_\s J_\s\cong \prod_\s A/J_\s\cong A/J_1\otimes \K^{n!}$, where the last isomorphism is
$(x_\s+I_\s)_\s\mapsto \sum_\s \Omega_\s(x_\s+J_\s)\otimes u_\s$. Here $\{u_\s\}$ denotes the canonical basis of $\K^{n!}$ modulo
the identification $\K^{n!}\cong \K^{S_n}$ so that for each $\s\in S_n$ the element $u_\s\colon S_n\to K$ is given by
$u_\s(\t)=\delta_{\s,\t}$ (Kronecker's delta). 
\end{proof}

\begin{cor} \label{TderPer}
The Lie algebra of ternary derivations of a perfect $n$-dimensional evolution algebra $\A$ is 
isomorphic to $\D_n(\K)\times\D_n(\K)$. 
\end{cor}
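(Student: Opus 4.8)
The plan is to obtain the result by applying the Lie functor to the structural decomposition $\taffaut(\A)\cong\Gm^{2n}\times\underline{S_n}$ furnished by Theorem \ref{TAUT}. The first step is to justify the identification $\tder(\A)\cong\operatorname{Lie}\big(\taffaut(\A)\big)$. For this I would use the dual numbers: by definition $\operatorname{Lie}\big(\taffaut(\A)\big)$ is the kernel of the map $\taffaut(\A)(\K[\ep]/(\ep^2))\to\taffaut(\A)(\K)$ induced by $\ep\mapsto 0$. A triple in this kernel has the form $(\id+\ep d_1,\id+\ep d_2,\id+\ep d_3)$ with each $d_i$ a linear endomorphism of $\A$ (such a map is automatically bijective over $\K[\ep]/(\ep^2)$, with inverse $\id-\ep d_i$). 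Substituting into \eqref{Ta}, read over $\A\otimes\K[\ep]/(\ep^2)$, and collecting the coefficient of $\ep$ returns precisely the ternary derivation identity \eqref{Td}. Thus the kernel is canonically $\tder(\A)$, and its bracket is the component-wise one.

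The second step is the computation itself. Since the Lie functor preserves (fibered) products (see \cite[Chapter 10, \S c, p. 190]{Milne}), applying it to Theorem \ref{TAUT} gives $\tder(\A)\cong\operatorname{Lie}(\Gm^{2n})\times\operatorname{Lie}(\underline{S_n})$. Now $\operatorname{Lie}(\Gm)\cong\K$ (the multiplicative group scheme has one-dimensional abelian Lie algebra), so $\operatorname{Lie}(\Gm^{2n})\cong\K^{2n}$; and $\operatorname{Lie}(\underline{S_n})=0$, because a constant finite group scheme is étale, hence its tangent space at the identity vanishes (in every characteristic). Therefore $\tder(\A)\cong\K^{2n}$.

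It remains to match this $2n$-dimensional space with $\D_n(\K)\times\D_n(\K)$. The identity component $\Gm^{2n}=V(J_1)$ is represented by the Hopf algebra $\mathcal{H}$ of Theorem \ref{TAUT}, in which only the diagonal indeterminates $x_{ii},y_{ii}$ survive; infinitesimally this says that a ternary derivation has $d_2$ and $d_3$ diagonal in the natural basis $\{e_i\}$, each contributing a copy of $\D_n(\K)$, while $d_1$ is then completely determined through $d_1(e_i^2)=d_2(e_i)e_i+e_id_3(e_i)$ (exactly as recorded in the $n=2$ example). This yields the desired isomorphism $\tder(\A)\cong\D_n(\K)\times\D_n(\K)$. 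The only genuine subtlety — and the step I would take most care over — is the first one: establishing the correspondence $\tder(\A)\cong\operatorname{Lie}(\taffaut(\A))$ together with the vanishing $\operatorname{Lie}(\underline{S_n})=0$; once these are in place, the remainder is a routine unwinding of the identifications.
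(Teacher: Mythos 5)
Your proposal is correct and follows essentially the same route as the paper: the paper's proof is the one-line observation that $\tder(\A)=\operatorname{Lie}(\taffaut_0(\A))\cong\D_n(\K)\times\D_n(\K)$, relying on Theorem \ref{TAUT} exactly as you do. You merely fill in the details the paper leaves implicit (the dual-numbers identification of $\operatorname{Lie}(\taffaut(\A))$ with $\tder(\A)$, the vanishing of $\operatorname{Lie}(\underline{S_n})$ by \'etaleness, and the matching with diagonal matrices), all of which are accurate and consistent with the paper's $n=2$ example.
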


\begin{proof}
It follows from the fact that the Lie algebra of the group scheme $\taffaut_0(\A)$ is precisely the algebra of the ternary derivations of $\A$: $
\tder(\A) = \hbox{Lie}(\taffaut_0(\A))\cong\D_n(\K)\times\D_n(\K).$
\end{proof}

\begin{cor} \label{perfectcase}
The following hold for an $n$-dimensional perfect evolution algebra $\A$ with natural basis $\B:=\{e_i\}_1^n$.
\begin{itemize}
\item[\rm (i)] if $(f_1,f_2,f_3) \in \taut(\A)$, then $f_1$ is completely determined by $f_2$ and $f_3$. 
More precisely, for any $(\lambda_1,\ldots,\lambda_n), (\mu_1,\ldots,\mu_n)\in (\K^\times)^n,$ let $f_2, f_3\colon\A\to\A$ be the linear maps whose matrices relative to $\B$ are $\hbox{diag}(\lambda_1,\ldots,\lambda_n)$ and $\hbox{diag}(\mu_1,\ldots,\mu_n)$, respectively. Then defining 
$f_1\colon\A\to\A$ 
as the linear map such that $f_1(e_i^2)=\lambda_i\mu_i e_i^2$ for any $i$, the triple $(f_1,f_2,f_3)$ is a ternary
automorphism of $\A$ and any element in $\taut(\A)_0$ arises in this way. Ternary automorphisms that are not in the component
$\taut(\A)_0$ arise by applying a permutation to those of the component  $\taut(\A)_0$.
\item[\rm (ii)] if $(d_1,d_2,d_3) \in \tder(\A)$, then $d_1$ is completely determined by $d_2$ and $d_3$. 
More precisely, for any two $(\lambda_1,\ldots,\lambda_n), (\mu_1,\ldots,\mu_n)\in \K^n,$ let $d_2, d_3\colon\A\to\A$ be the linear maps 
whose matrices 
relative to $\B$ are $\hbox{diag}(\lambda_1,\ldots,\lambda_n)$ and $\hbox{diag}(\mu_1,\ldots,\mu_n)$, respectively. Define
$d_1\colon\A\to\A$ as the unique
linear map such that $d_1(e_i^2)=(\lambda_i+\mu_i) e_i^2$ for any $i$. Then $(d_1,d_2,d_3)\in\tder(\A)$ and 
any ternary derivation of $\A$ arises in this way.
\end{itemize}
\end{cor}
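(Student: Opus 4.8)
The plan is to obtain both parts as consequences of the structural results already proved, Theorem \ref{TAUT} and Corollary \ref{TderPer}, read together with the explicit descriptions \eqref{Jsigma} of the ideals $J_\sigma$. A remark used throughout: since $\A$ is perfect its structure matrix $\M_\B$ is invertible, so $e_i^2=\sum_k w_{ki}e_k$ expresses $\{e_i^2\}_{i}$ as the image of $\{e_i\}_i$ under an invertible matrix, whence $\{e_i^2\}_i$ is again a basis of $\A$; this is exactly what lets a linear map be pinned down by prescribing its values on the $e_i^2$.

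For (i), I would first read off the shape of a ternary automorphism lying in the identity component $\taut(\A)_0=V(J_1)(\K)$. By Lemma \ref{emirp}(iii), $J_1$ contains every off-diagonal coordinate $x_i^j,y_i^j$ with $i\ne j$, and the relations $z\det(x_i^j)-1$, $w\det(y_i^j)-1$ collapse modulo these to $z\prod_i x_{ii}-1$ and $w\prod_i y_{ii}-1$. Hence on $\K$-points the matrices of $f_2$ and $f_3$ are diagonal, $\mathrm{diag}(\lambda_1,\ldots,\lambda_n)$ and $\mathrm{diag}(\mu_1,\ldots,\mu_n)$, with $\prod_i\lambda_i$ and $\prod_i\mu_i$ invertible, forcing each $\lambda_i,\mu_i\in\K^\times$. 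Lemma \ref{necsufTA} then gives $f_1(e_i^2)=f_2(e_i)f_3(e_i)=\lambda_i\mu_i e_i^2$, and since $\{e_i^2\}$ is a basis this determines $f_1$, proving the forward implication. For the converse I would check \eqref{Ta} on the basis: for $i=j$ one has $f_2(e_i)f_3(e_i)=\lambda_i\mu_i e_i^2=f_1(e_i^2)$ by construction, while for $i\ne j$ both sides vanish by \eqref{E2}; bilinearity extends this to all of $\A$, and bijectivity is immediate since all the scalars $\lambda_i,\mu_i,\lambda_i\mu_i$ are nonzero. Finally, the remaining components of $\taffaut(\A)$ are the $V(J_\sigma)(\K)$ with $\sigma\ne 1$; by \eqref{Jsigma} a triple in $V(J_\sigma)(\K)$ is supported on the pattern of $\sigma$, so $f_2$ and $f_3$ are the monomial maps $e_i\mapsto\lambda_i e_{\sigma(i)}$ and $e_i\mapsto\mu_i e_{\sigma(i)}$, i.e.\ they are obtained from identity-component automorphisms by applying the permutation $\sigma$ (the automorphism of $A$ carrying $J_1$ to $J_\sigma$, recorded just before Theorem \ref{TAUT}, is what implements this at the level of coordinate rings). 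This is the asserted description.

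For (ii), the cleanest route is through the Lie functor underlying Corollary \ref{TderPer}. Over the dual numbers $\K[\epsilon]/(\epsilon^2)$, a triple $(\id+\epsilon d_1,\id+\epsilon d_2,\id+\epsilon d_3)$ satisfies \eqref{Ta} if and only if, comparing the coefficients of $\epsilon$, the identity \eqref{Td} holds; thus $\tder(\A)=\mathrm{Lie}(\taffaut_0(\A))$, identifying ternary derivations with the tangent directions of $V(J_1)$ at the identity. Since $\taffaut_0(\A)\cong\Gm^{2n}$ has the diagonal entries of $f_2,f_3$ as coordinates, these tangent directions force $d_2,d_3$ to be diagonal, $\mathrm{diag}(\lambda_i)$ and $\mathrm{diag}(\mu_i)$, with the $\lambda_i,\mu_i$ now free in $\K$ (writing $x_{ii}=1+\epsilon\lambda_i$, the determinant relation linearises to invertibility of $1+\epsilon\sum_i\lambda_i$, which imposes no constraint). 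Evaluating \eqref{Td} at $x=y=e_i$ gives $d_1(e_i^2)=d_2(e_i)e_i+e_id_3(e_i)=(\lambda_i+\mu_i)e_i^2$, which determines $d_1$ on the basis $\{e_i^2\}$ and shows that every ternary derivation has this form; conversely, defining $d_1$ by this formula and checking \eqref{Td} on the basis (the case $i=j$ holds by construction, the case $i\ne j$ by \eqref{E2}) yields $(d_1,d_2,d_3)\in\tder(\A)$.

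I expect the main obstacle to be part (ii): the genuine content is not the two short verifications but translating the abstract isomorphism $\tder(\A)\cong\D_n(\K)\times\D_n(\K)$ into the concrete statement that $d_2,d_3$ must be diagonal with independent entries. I would handle this by making the Lie-functor dictionary explicit as above — identifying ternary derivations with the $\K[\epsilon]/(\epsilon^2)$-points of $\taffaut_0(\A)$ lying over the identity and reading off their coordinates — since it is this identification, rather than a bare dimension count, that simultaneously produces the formula for $d_1$ and certifies that every ternary derivation arises in the stated form.
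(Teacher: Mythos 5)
Your proposal is correct and follows exactly the route the paper intends: the corollary is stated without proof as an immediate consequence of Lemma \ref{necsufTA}, Theorem \ref{TAUT} and Corollary \ref{TderPer}, and your argument simply makes explicit the passage from the component decomposition $V(I)=\bigcup_\sigma V(J_\sigma)$ to the concrete form of the triples, plus the dual-numbers reading of the Lie functor for part (ii). Nothing in your write-up deviates from or adds to the paper's approach; it just supplies the details the authors left to the reader.
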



\section{Ternary derivations} \label{sec4}


In this section, we go beyond the perfect case. We determine necessary and sufficient conditions for a triple of linear maps $(d_1, d_2, d_3)$ 
on an evolution algebra $\A$ to be a ternary derivation of $\A$. If $\A$ is perfect, 
we provide an expression for $d_1$ in terms of the other components.
Otherwise, if $\M_\B$ is singular, then the entries of $\text{diag}(d_2 + d_3)$ are no longer independent from one another, 
as the eigenspaces for the corresponding column vectors overlap. 
We divide our study into two steps: we first determine the necessary and sufficient condition 
that $d_2$ and $d_3$ must satisfy; secondly, given $d_2$ and $d_3$ we find a formula for $d_1$.

\smallskip

In what follows, we assume that $\A$ is a finite-dimensional evolution algebra with natural basis $\B$ and structure matrix $\M_\B$ as in \S2.1. For a triple of linear maps $(d_1, d_2, d_3)$ of $\A$ we write $\, d_{\ell}(e_i) = \sum^n_{k = 1} d^{(\ell)}_{ki}e_k$, for $\, \ell = 1, 2, 3.$

\begin{prop} \label{iffCondition}
Let $\A$ be a finite-dimensional evolution algebra.  
A triple of linear maps $(d_1, d_2, d_3)$ on $\A$ is a ternary derivation on $\A$ if and only if
it satisfies the identities: 
\allowdisplaybreaks
\begin{align}
& w_{ki}d^{(2)}_{ij} + w_{kj}d^{(3)}_{ji} = 0, \, \, \mbox{ for all} \, \, k, i, j \in \Lambda \, \mbox{ with } i \neq j, \tag{Td1} \label{Td1}
\\
& \sum^n_{k = 1} w_{ki}d^{(1)}_{jk} = \big(d^{(2)}_{ii} + d^{(3)}_{ii}\big)w_{ji}, \, \, \mbox{ for all } \, \, j.  \tag{Td2} \label{Td2}
\end{align}
\end{prop}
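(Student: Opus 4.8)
The plan is to exploit the bilinearity of the product to reduce the defining identity \eqref{Td} to a finite family of scalar equations indexed by the natural basis. Since $d_1, d_2, d_3$ are linear and the product of $\A$ is bilinear, the triple $(d_1,d_2,d_3)$ satisfies $d_1(xy)=d_2(x)y+xd_3(y)$ for all $x,y\in\A$ if and only if it satisfies this identity for all pairs $(x,y)=(e_i,e_j)$ with $i,j\in\Lambda$. I would therefore fix such a pair, expand both sides in the natural basis using the multiplication rules \eqref{E1} and \eqref{E2}, and compare the coefficient of each $e_k$. The computation splits naturally into the two cases $i\neq j$ and $i=j$, which will produce \eqref{Td1} and \eqref{Td2} respectively.

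For the off-diagonal case $i\neq j$, the left-hand side vanishes because $e_ie_j=0$ by \eqref{E2}, so $d_1(e_ie_j)=d_1(0)=0$; this is precisely why the resulting condition involves only $d_2$ and $d_3$. On the right-hand side I would compute $d_2(e_i)e_j=\sum_k d^{(2)}_{ki}e_ke_j=d^{(2)}_{ji}\,e_j^2$ and $e_id_3(e_j)=\sum_k d^{(3)}_{kj}e_ie_k=d^{(3)}_{ij}\,e_i^2$, using \eqref{E2} to discard all cross terms and then \eqref{E1} to expand $e_j^2=\sum_k w_{kj}e_k$ and $e_i^2=\sum_k w_{ki}e_k$. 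Collecting the coefficient of $e_k$ gives $d^{(2)}_{ji}w_{kj}+d^{(3)}_{ij}w_{ki}=0$, which after swapping the free labels $i\leftrightarrow j$ is exactly \eqref{Td1}.

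For the diagonal case $i=j$, I would first expand $d_1(e_i^2)=d_1\big(\sum_k w_{ki}e_k\big)=\sum_{m}\big(\sum_k w_{ki}d^{(1)}_{mk}\big)e_m$ using \eqref{E1} and the linearity of $d_1$. On the right-hand side the same annihilation of cross terms (again by \eqref{E2}) yields $d_2(e_i)e_i+e_id_3(e_i)=\big(d^{(2)}_{ii}+d^{(3)}_{ii}\big)e_i^2=\big(d^{(2)}_{ii}+d^{(3)}_{ii}\big)\sum_m w_{mi}e_m$. Equating the coefficient of $e_m$ on both sides and relabelling $m$ as $j$ delivers exactly \eqref{Td2}.

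There is no serious obstacle here: the argument is a direct coefficient comparison, and the only points requiring care are the systematic use of \eqref{E2} to kill the off-diagonal products $e_ke_j$ (for $k\neq j$) and $e_ie_k$ (for $k\neq i$), and the bookkeeping needed to align the free and summation indices of the computed relations with the stated forms \eqref{Td1} and \eqref{Td2}. In particular, one should note the index asymmetry in \eqref{Td1}, where the $d^{(2)}$ and $d^{(3)}$ entries carry transposed subscripts; this is simply a reflection of the fact that $d_2$ acts on the left factor while $d_3$ acts on the right.
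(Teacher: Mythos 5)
Your proof is correct and follows essentially the same route as the paper: expand the defining identity \eqref{Td} on pairs of natural basis vectors, use \eqref{E2} to kill cross terms and \eqref{E1} to expand squares, and compare coefficients, with the cases $i\neq j$ and $i=j$ yielding \eqref{Td1} and \eqref{Td2} respectively. The only stylistic difference is that you justify the converse by invoking bilinearity of both sides of \eqref{Td} to reduce to basis pairs, whereas the paper verifies it by expanding general $x=\sum_k\lambda_k e_k$ and $y=\sum_k\eta_k e_k$ explicitly; your index bookkeeping (the swap $i\leftrightarrow j$ to match \eqref{Td1}) is handled correctly.
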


\begin{proof}
Assume first that $(d_1, d_2, d_3)$ is a ternary derivation on $\mathcal{A}$ and take $e_i, e_j \in \mathcal{B}$ with $i \neq j$. Using \eqref{E2} a couple of  times, \eqref{Td} and \eqref{E1} we obtain that 
\allowdisplaybreaks
\begin{align*}
 0 & = d_1(e_i e_j) = e_i d_2(e_j) + d_3(e_i) e_j =  
 e_i \sum_k d^{(2)}_{kj} e_k + e_j \sum_k d^{(3)}_{ki} e_k
 = d^{(2)}_{ij}e_i^2 + d^{(3)}_{ji}e_j^2 
 \\
 & = d^{(2)}_{ij}\sum_k w_{ki}e_k + d^{(3)}_{ji}\sum_k w_{kj}e_k = \sum_k (d^{(2)}_{ij} w_{ki} + d^{(3)}_{ji}w_{kj})e_k,
\end{align*}
and \eqref{Td1} follows. To prove \eqref{Td2} we apply \eqref{Td} and (E1) and equate the resulting equations:
\begin{align*}
d_1(e_i^2) & =  e_i d_2(e_i) + e_i d_3(e_i) = e_i \sum_k d^{(2)}_{ki} e_k + e_i \sum_k d^{(3)}_{ki} e_k = 
d^{(2)}_{ii}e_i^2 + d^{(3)}_{ii}e_i^2 
\\
& \stackrel{\rm (E2)}{=} \sum_k \big(d^{(2)}_{ii} + d^{(3)}_{ii}\big)w_{ki}e_k,
\\
d_1(e_i^2) & = d_1\Big(\sum_j w_{ji}e_j\Big) = \sum_j w_{ji} d_1(e_j) = \sum_k \Big(\sum_j w_{ji}d^{(1)}_{kj}\Big)e_k.
\end{align*}
Clearly, \eqref{Td2} holds. Conversely, assume that $(d_1, d_2, d_3)$ satisfies both \eqref{Td1} and \eqref{Td2}. Take $x, y \in \A$ and write $x = \sum_{k\in \Lambda} \lambda_k e_k$ and $y = \sum_{k\in\Lambda}\eta_k e_k$.
 
\allowdisplaybreaks
\begin{align*}
d_1(xy) &=  d_1\left(\Big(\sum_{k\in \Lambda} \lambda_k e_k\Big)\Big(\sum_{k\in\Lambda}\eta_k e_k\Big)\right) = d_1\Big(\sum_{k\in \Lambda} \lambda_k\eta_k e_k^2\Big) \stackrel{{\rm(E1)}}{=} 
\sum_{k\in \Lambda} \lambda_k\eta_k d_1\Big(\sum_{j\in\Lambda}w_{jk}e_j\Big) \\ &=  
\sum_{k\in \Lambda} \lambda_k\eta_k \sum_{j\in\Lambda}w_{jk} d_1(e_j) = 
\sum_{k \in \Lambda} \lambda_k\eta_k \sum_{j\in\Lambda}w_{jk} \sum_{i\in\Lambda} d^{(1)}_{ji}e_i =   
\sum_{k\in \Lambda} \lambda_k\eta_k \sum_{i\in\Lambda} \sum_{j\in\Lambda}w_{jk} d^{(1)}_{ji}e_i\\ &= 
\sum_{k\in \Lambda} \lambda_k\eta_k \sum_{i\in\Lambda}\Big(\sum_{j\in\Lambda}w_{jk} d^{(1)}_{ij}\Big)e_i \stackrel{\eqref{Td2}}{=} \sum_{k\in \Lambda} \lambda_k\eta_k \sum_{i\in\Lambda} \Big(\big(d^{(2)}_{kk} + d^{(3)}_{kk}\big)w_{ik}\Big)e_i \\& \stackrel{{\rm(E1)}}{=} 
\sum_{k\in \Lambda} \lambda_k\eta_k \big(d^{(2)}_{kk} + d^{(3)}_{kk}\big) e_k^2.
\end{align*}
On the other hand, we have that 
\begin{align*}
d_2(x)y + d_3(y)x &=  d_2\Big(\sum_{i\in\Lambda}\lambda_i e_i\Big) \Big(\sum_{j\in\Lambda}\eta_j e_j\Big) + \Big(\sum_{i\in\Lambda}\lambda_i e_i\Big) d_3\Big(\sum_{j\in\Lambda}\eta_j e_j\Big) \\ 
&= \Big(\sum_{i\in\Lambda}\lambda_i d_2(e_i)\Big)\Big(\sum_{j\in\Lambda}\eta_j e_j\Big) + \Big(\sum_{i\in\Lambda}\lambda_i e_i\Big)\Big(\sum_{j\in\Lambda}\eta_j d_3(e_j)\Big)\\ 
&=  \Big(\sum_{i\in\Lambda}\lambda_i \sum_{j\in\Lambda} d^{(2)}_{ji} e_j\Big)\Big(\sum_{j\in\Lambda}\eta_j e_j\Big) + \Big(\sum_{i\in\Lambda}\lambda_i e_i\Big)\Big(\sum_{j\in\Lambda}\eta_j\sum_{i\in\Lambda} d^{(3)}_{ij}e_i\Big)\\
&=  \Big(\sum_{j\in\Lambda}\Big(\sum_{i\in\Lambda}\lambda_i d^{(2)}_{ji}\Big)e_j\Big)\Big(\sum_{j\in\Lambda}\eta_j e_j\Big) + \Big(\sum_{i\in\Lambda}\lambda_i e_i\Big)\Big(\sum_{i\in\Lambda}\Big(\sum_{j\in\Lambda}\eta_j d^{(3)}_{ij}\Big)e_i\Big)\\
& =  \sum_{j\in\Lambda}\Big(\sum_{i\in\Lambda}\lambda_i d^{(2)}_{ji}\Big)\eta_j e_j^2 + \sum_{i\in\Lambda}\lambda_i\Big(\sum_{j\in\Lambda}\eta_j d^{(3)}_{ij}\Big)e_i^2 \\
&= \sum_{i\in\Lambda}\sum_{j\in\Lambda}\lambda_i\eta_j d^{(2)}_{ji} e_j^2 + \sum_{i\in\Lambda}\sum_{j\in\Lambda}\lambda_i\eta_j d^{(3)}_{ij}e_i^2\\
&\stackrel{\eqref{E1}}{=} \sum_{i\in\Lambda}\sum_{j\in\Lambda}\lambda_i\eta_j d^{(2)}_{ji}\sum_{k\in\Lambda}w_{kj}e_k + \sum_{i\in\Lambda}\sum_{j\in\Lambda}\lambda_i\eta_j d^{(3)}_{ij}\sum_{k\in\Lambda}w_{ki}e_k \\
& = \sum_{k\in\Lambda}\sum_{i\in\Lambda}\sum_{j\in\Lambda}\lambda_i\eta_j \Big(d^{(2)}_{ji}w_{kj} + d^{(3)}_{ij}w_{ki}\Big) e_k
\stackrel{\eqref{Td1}}{=} \sum_{k\in\Lambda}\sum_{i\in\Lambda} \lambda_i\eta_i \Big(\big(d^{(2)}_{ii}+d^{(3)}_{ii}\big)w_{ki}\Big)e_k \\
& = \sum_{i\in\Lambda} \lambda_i\eta_i \big(d^{(2)}_{ii} + d^{(3)}_{ii}\big) e_i^2,
\end{align*}
and the result follows. 
\end{proof}

Notice that we can express equations \eqref{Td1} and \eqref{Td2} as linear systems of equations. To do so, 
we write $[\M_\B]_\ell$ to refer to the $\ell^{\text{th}}$ column of $\M_\B$.  
Given $i, j \in \Lambda$ with $i \neq j$, from \eqref{Td1} we obtain that the following equivalent system:
\begin{equation} \label{system1} 
\begin{pmatrix}
[\M_\B]_i & [\M_\B]_j 
\end{pmatrix}
\begin{pmatrix}
d^{(2)}_{ij} 
\\
 d^{(3)}_{ji}
\end{pmatrix} = 0.\tag{S1}
\end{equation} 
Similarly, from \eqref{Td2} we obtain the equivalent system given below. 
\begin{equation} \label{system2} 
\Big(d^{(1)}_{j1} \, \ldots \, d^{(1)}_{jn}\Big) 
\M_\B = 
\begin{pmatrix}
 w_{j1} \big(d^{(2)}_{11}+d^{(3)}_{11}\big)
\\ 
\vdots 
\\
w_{jn} \big(d^{(2)}_{nn} + d^{(3)}_{nn}\big)
\end{pmatrix} \tag{S2},
\end{equation}
or equivalently, 
\begin{equation} \label{system3} 
d_1 \M_\B  = \M_\B \mathrm{diag}(d_2 + d_3). \tag{S3}
\end{equation} 
From here, we deduce that $d_1$ must be a matrix with eigenvectors being the columns of $\M_\B$, with the entries on the diagonal of $d_2 + d_3$ being the corresponding eigenvalues.

In what follows, we do not require $\A$ to be perfect; that is, its structure matrix $\M_\B$ may be singular. 
We assume that $e_1^2, \ldots, e_r^2$ are independent. For $i \in \{1, \ldots ,n-r\},$ we define the matrix 
$C = (c_{ki}) \in M_{r \times n - r}(\K)$ by $\, e_{r+i}^2 = \sum_{k=1}^r c_{ki}e_k^2.$ 

We first investigate $d_2$ and $d_3$. The proof of the following result is straightforward and we leave 
it to the reader.

\begin{prop} \label{d2d3}
Let $d_2$ and $d_3$ be linear maps on an $n$-dimensional evolution algebra $\A$. Then \eqref{system1} holds if and only if the following condition hold for all $i,j\in\{1,\ldots,n\}$.
    \begin{enumerate}
        \item[\rm (i)] If $e_i^2 = ce_j^2$ for $c\neq0$, then $d^{(3)}_{ij},d^{(3)}_{ji}$ are free, with  $d^{(2)}_{ij} = -cd^{(3)}_{ji}$ and $d^{(2)}_{ji}=-c^{-1}d^{(3)}_{ij}$.
        \item[\rm (ii)]  If $e_i^2=0$ and $e_j^2\neq0$, then $d^{(2)}_{ji}$ and $d^{(2)}_{ij}$ are free and $d^{(2)}_{ij}=d^{(2)}_{ij}=0$.
        \item[\rm (iii)]  If $e_i^2=e_j^2=0$, then $d^{(2)}_{ij},d^{(2)}_{ji},d^{(3)}_{ij},$ and $d^{(3)}_{ji}$ are free.
        \item[\rm (iv)]  If $e_i^2$ and $e_j^2$ are independent, then $d^{(2)}_{ij}=d^{(2)}_{ji}=d^{(3)}_{ij}=d^{(2)}_{ji}=0$.
    \end{enumerate}
\end{prop}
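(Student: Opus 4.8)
The plan is to recognise \eqref{system1} as a statement purely about linear dependence among the vectors $e_i^2$. First I would observe that, by \eqref{E1}, the $i^{\text{th}}$ column $[\M_\B]_i$ is exactly the coordinate vector of $e_i^2$ in the natural basis $\B$; hence, reading \eqref{system1} for an ordered pair $(i,j)$ with $i\neq j$, the system is equivalent to the single vector identity
\[
d^{(2)}_{ij}\,e_i^2 + d^{(3)}_{ji}\,e_j^2 = 0 \quad \text{in } \A.
\]
Consequently, for each unordered pair $\{i,j\}$ the content of \eqref{system1} is the conjunction of the two companion relations coming from $(i,j)$ and $(j,i)$, namely $d^{(2)}_{ij}e_i^2 + d^{(3)}_{ji}e_j^2 = 0$ and $d^{(2)}_{ji}e_j^2 + d^{(3)}_{ij}e_i^2 = 0$. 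Since the equivalence holds literally, pair by pair, both implications of the ``if and only if'' are established at once, and it remains only to read off the solution set of these two relations according to how $e_i^2$ and $e_j^2$ are related.

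The argument then splits into the four mutually exclusive configurations of the pair $(e_i^2, e_j^2)$. If $e_i^2$ and $e_j^2$ are linearly independent (case (iv)), a vanishing linear combination forces every coefficient to vanish, so $d^{(2)}_{ij}=d^{(2)}_{ji}=d^{(3)}_{ij}=d^{(3)}_{ji}=0$. If $e_i^2 = c\,e_j^2$ with $c\neq 0$ (case (i)), substituting reduces each companion relation to a scalar multiple of the nonzero vector $e_j^2$; the resulting single scalar equation determines the relevant $d^{(2)}$-entry in terms of the free $d^{(3)}$-entry. If $e_i^2 = 0$ while $e_j^2\neq 0$ (case (ii)), the terms carrying $e_i^2$ drop out, so the coefficient of $e_j^2$ in each relation must vanish, whereas the coefficients multiplying the zero vector $e_i^2$ are unconstrained; this pins down which of the four scalars vanish and which remain free. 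Finally, if $e_i^2 = e_j^2 = 0$ (case (iii)), both relations are automatically satisfied and all four scalars are free.

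Each of these is a one-line linear-algebra computation, so I do not anticipate a genuine difficulty; the work is entirely bookkeeping. The one point that deserves care is the proportional case (i): here one must track the scalar $c$ through both orderings, checking that the relation arising from $(j,i)$ produces a constraint consistent with $e_j^2 = c^{-1}e_i^2$, and that the determined entries $d^{(2)}_{ij}$ and $d^{(2)}_{ji}$ come out with the correct factors $c$ and $c^{-1}$ as in the statement. Beyond this sign- and inverse-tracking, the proof is a direct case check, which is why it may reasonably be left to the reader.
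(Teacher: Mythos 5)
Your reduction of \eqref{system1} for the ordered pair $(i,j)$ to the single vector identity $d^{(2)}_{ij}e_i^2+d^{(3)}_{ji}e_j^2=0$, followed by a case analysis on the linear relation between $e_i^2$ and $e_j^2$, is exactly the intended argument (the paper leaves this proof to the reader), and cases (ii)--(iv) are indeed the one-line checks you describe. The problem is that the one step you yourself single out as ``the point that deserves care'' --- tracking $c$ through case (i) --- is precisely the step you do not perform, and performing it contradicts your claim that the determined entries ``come out with the correct factors $c$ and $c^{-1}$ as in the statement.'' Substituting $e_i^2=c\,e_j^2$ (with $e_j^2\neq 0$, which is needed to keep this case disjoint from (iii)) into $d^{(2)}_{ij}e_i^2+d^{(3)}_{ji}e_j^2=0$ gives $\bigl(c\,d^{(2)}_{ij}+d^{(3)}_{ji}\bigr)e_j^2=0$, hence $d^{(2)}_{ij}=-c^{-1}d^{(3)}_{ji}$, and the companion relation for $(j,i)$ gives $d^{(2)}_{ji}=-c\,d^{(3)}_{ij}$: the roles of $c$ and $c^{-1}$ are interchanged relative to the printed item (i). The statement as printed also contains typographical slips (item (ii) declares $d^{(2)}_{ij}$ both free and zero, and item (iv) lists $d^{(2)}_{ji}$ twice while omitting $d^{(3)}_{ji}$), and your method, carried out, yields the corrected versions --- in case (ii), $d^{(2)}_{ij}$ and $d^{(3)}_{ij}$ are free while $d^{(2)}_{ji}=d^{(3)}_{ji}=0$. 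So the approach is sound and essentially complete in outline, but you must actually do the case (i) substitution rather than assert agreement with the statement; when you do, either record the constants as $-c^{-1}$ and $-c$ or note explicitly that item (i) of the proposition has them swapped.
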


\begin{cor} \label{iffTD}
A triple of linear maps $(d_1, d_2, d_3)$ on a perfect finite-dimensional evolution algebra $\A$ is a ternary derivation of $\A$ if and only if the following conditions hold.
\begin{enumerate}
\item[\rm (i)] The matrices of $d_2$ and $d_3$ (with respect to $\B$) are diagonal;
\item[\rm (ii)] $d_1 = \M_\B(d_2 + d_3)\M_\B^{-1}.$
\end{enumerate}
\end{cor}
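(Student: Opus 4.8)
The plan is to reduce everything to the two reformulated systems \eqref{system1} and \eqref{system3} and then exploit that perfection makes $\M_\B$ invertible. By Proposition \ref{iffCondition}, the triple $(d_1,d_2,d_3)$ is a ternary derivation precisely when \eqref{Td1} and \eqref{Td2} hold, equivalently when \eqref{system1} and \eqref{system3} hold; so I would verify conditions (i) and (ii) against these two systems.

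For the forward implication, I would first establish (i). Since $\A$ is perfect, the columns $[\M_\B]_1,\ldots,[\M_\B]_n$ form a basis of $\K^n$ and are in particular linearly independent. Reading \eqref{system1} as the dependence relation $d^{(2)}_{ij}[\M_\B]_i + d^{(3)}_{ji}[\M_\B]_j = 0$ for each pair $i \neq j$, linear independence of the two columns involved forces $d^{(2)}_{ij} = d^{(3)}_{ji} = 0$. Letting $i,j$ range over all distinct pairs annihilates every off-diagonal entry of $d_2$ and of $d_3$, so both matrices are diagonal, which is (i). Equivalently, one may invoke Proposition \ref{d2d3}(iv), since in the perfect case $e_i^2$ and $e_j^2$ are independent for all $i \neq j$. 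With (i) in hand, $\mathrm{diag}(d_2+d_3) = d_2+d_3$, and solving \eqref{system3} for $d_1$ (legitimate because $\M_\B$ is invertible) yields $d_1 = \M_\B\,\mathrm{diag}(d_2+d_3)\,\M_\B^{-1} = \M_\B(d_2+d_3)\M_\B^{-1}$, which is (ii).

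For the converse, I would assume (i) and (ii) and run the equivalences backwards. Diagonality of $d_2$ and $d_3$ makes $d^{(2)}_{ij} = d^{(3)}_{ji} = 0$ for $i \neq j$, so \eqref{system1} (hence \eqref{Td1}) holds trivially; and condition (ii), after right-multiplying by $\M_\B$ and using $\mathrm{diag}(d_2+d_3) = d_2+d_3$, is exactly \eqref{system3} (hence \eqref{Td2}). Proposition \ref{iffCondition} then certifies that $(d_1,d_2,d_3)$ is a ternary derivation.

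I do not anticipate a genuine obstacle: the corollary is a direct linear-algebra consequence of Proposition \ref{iffCondition} together with the invertibility of $\M_\B$. The only points requiring mild care are bookkeeping of the index convention, so that $d^{(2)}_{ij}$ is the $(i,j)$ entry of the matrix of $d_2$ and the pairs $(i,j)$ and $(j,i)$ are both accounted for when concluding diagonality of each of $d_2$ and $d_3$, and the observation that a diagonal matrix coincides with its own diagonal part, which is what lets condition (ii) be phrased with $d_2+d_3$ in place of $\mathrm{diag}(d_2+d_3)$.
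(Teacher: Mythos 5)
Your proposal is correct and follows essentially the same route as the paper: both directions are reduced to Proposition \ref{iffCondition} via the systems \eqref{system1} and \eqref{system3}, with perfection (invertibility of $\M_\B$, equivalently linear independence of its columns, which is the content of Proposition \ref{d2d3}(iv) in this case) forcing diagonality of $d_2$ and $d_3$ and allowing \eqref{system3} to be solved uniquely for $d_1$. The paper's own proof is just a terser version of the same argument.
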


\begin{proof}
Assume that (i) and (ii) hold. From (i) we get that $d^{(2)}_{ji} = d^{(3)}_{ij} = 0$ for all $i, j \in \Lambda$ with $i \neq j$, and \eqref{Td1} holds. Moreover, (ii) implies that \eqref{system3} has a solution, which tells us that \eqref{Td2} is satisfied. The converse follows from Proposition \ref{d2d3} (iv) and \eqref{system3}.
\end{proof}
 
If $\M_\B$ is nonsingular (that is, $\A$ is perfect), then we have proved that the resulting matrix for $d_1$ is uniquely determined by $\M_\B$ and the chosen eigenvalues. However, this fails to be the case when $\M_\B$ is singular, as its columns no longer span the underlying algebra. In such a case, there are more possible matrices with the same eigenvector-eigenvalue combinations. To be able to find a closed form solution for $d_1$, we make use here of the concept of a generalised inverse for a square matrix.

\begin{define}
A matrix $G$ is called a {\bf generalised inverse} of a square matrix $A$ if the equation $AGA = A$ holds.
\end{define}

Recall that an element $x$ of a ring $R$ is said to be {\bf von Neumann regular} if there is some $y\in R$
such that $xyx = x$. A ring whose all elements are von Neumann regular is called a {\bf von Neumann regular ring}.
It is well known that the matrix ring $M_n(\K)$ is von Neumann regular. 
Consequently, all square matrices have a generalised inverse. At this point, it is worth mentioning that there might be more than one solution for the generalised inverse of a matrix. We select an appropriate choice for $G$ in the next result.

\begin{lemma} \label{gmatrix}
The following hold for an evolution algebra $\A$ as before.
\begin{enumerate}
\item[\rm (i)] The 
reduced row-echelon form of the structure matrix $\M_\B$ of $\A$ is the block matrix $\M_\B':= \begin{pmatrix}
    I_r & C\\
    O & O
\end{pmatrix},$ where $C \in M_{r, \, n - r}(\K)$.

\item[\rm (ii)] Any matrix of the form $\begin{pmatrix}
    I_r & O\\
    O & K
\end{pmatrix},$ where $K$ is an arbitrary matrix, is a generalised inverse of $\M_\B'$. 
\item[\rm (iii)] The invertible matrix $G$ such that $G \M_\B = \M_\B'$ is a generalised inverse of $\M_\B$.
\end{enumerate}
\end{lemma}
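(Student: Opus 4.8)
The plan is to handle the three parts in order, exploiting throughout that the $\ell$-th column $[\M_\B]_\ell$ of the structure matrix is exactly the coordinate vector of $e_\ell^2$ in the basis $\B$.

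For (i), I would first note that the rank of $\M_\B$ equals $r$: the columns $[\M_\B]_1,\ldots,[\M_\B]_r$ are independent by hypothesis, and every remaining column lies in their span because $e_{r+i}^2=\sum_{k=1}^r c_{ki}e_k^2$ by the definition of $C$. Consequently the pivot columns of the reduced row-echelon form $\M_\B'$ are precisely the first $r$ columns (a column is a pivot column exactly when it is independent of those to its left). Since there are then $r$ pivots, sitting in rows $1,\ldots,r$ and columns $1,\ldots,r$, the defining conditions of reduced row-echelon form force each pivot column to have a single $1$ on the diagonal, so the top-left block is $I_r$ and the bottom $n-r$ rows vanish. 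The top-right block is identified with $C$ by uniqueness of coordinates: for a non-pivot column $r+i$, its entries in the pivot rows are the unique scalars expressing $[\M_\B]_{r+i}$ as a combination of $[\M_\B]_1,\ldots,[\M_\B]_r$, and the relation $e_{r+i}^2=\sum_{k=1}^r c_{ki}e_k^2$ exhibits these scalars as the $c_{ki}$.

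For (ii), I would simply perform the block multiplication of $\M_\B' G \M_\B'$ with $G=\begin{pmatrix} I_r & O\\ O & K\end{pmatrix}$; every occurrence of the arbitrary block $K$ is multiplied by a zero block, so the product collapses to $\begin{pmatrix} I_r & C\\ O & O\end{pmatrix}=\M_\B'$ independently of $K$. The one structural fact worth recording is that $\M_\B'$ is idempotent, which is immediate from its block form and which also yields $G\M_\B'=\M_\B'$.

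For (iii), I would use that row reduction to reduced row-echelon form is realised by left multiplication by an invertible matrix, so an invertible $G$ with $G\M_\B=\M_\B'$ exists. Writing $\M_\B=G^{-1}\M_\B'$ and invoking the idempotency of $\M_\B'$ gives $\M_\B G\M_\B=\M_\B(G\M_\B)=\M_\B\M_\B'=G^{-1}\M_\B'\M_\B'=G^{-1}(\M_\B')^2=G^{-1}\M_\B'=\M_\B$, so $\M_\B G\M_\B=\M_\B$ and $G$ is a generalised inverse of $\M_\B$. The only step demanding genuine care is the identification of the top-right block in (i); once the block form of $\M_\B'$ and its idempotency are in hand, parts (ii) and (iii) are routine block computations.
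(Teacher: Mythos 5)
Your proof is correct and follows essentially the same route as the paper: parts (i) and (ii) are the routine verifications the paper leaves to the reader, and your argument for (iii) via the idempotency of $\M_\B'$ is exactly the paper's computation (the paper phrases idempotency as ``$I_n$ is a generalised inverse of $\M_\B'$'', i.e.\ the case $K=I_{n-r}$ of (ii)). No gaps.
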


\begin{proof}
(i) and (ii) are straightforward calculations and we leave it to the reader. 

\smallskip

\noindent (iii) From (ii) we know that $I_n$ is a generalised inverse of $\M_\B'$. Thus: 
\[
\M_\B G \M_\B = G^{-1} \M_\B' I_n \M_\B' = G^{-1}\M_\B' = \M_\B,
\]
finishing the proof.
\end{proof}

\begin{lemma} \label{partialsol}
Let $d_2, d_3$ be linear maps on $\A$ and $G$ a generalised inverse of $\M_\B$.
Any solution of \eqref{system3} is of the form 
$\M_\B\mathrm{diag}(d_2+d_3)G + A$, where $A$ is such that $A\M_\B = 0$.
\end{lemma}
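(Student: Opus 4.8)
The plan is to characterize all solutions of the matrix equation \eqref{system3}, namely $d_1 \M_\B = \M_\B \mathrm{diag}(d_2 + d_3)$, by first exhibiting one particular solution and then describing the full solution set as that particular solution plus the solutions of the associated homogeneous equation. First I would verify that $X_0 := \M_\B\mathrm{diag}(d_2+d_3)G$ is indeed a particular solution. Substituting into the left-hand side gives
\[
X_0 \M_\B = \M_\B\mathrm{diag}(d_2+d_3)G\M_\B.
\]
Since $G$ is a generalised inverse of $\M_\B$, we have $\M_\B G \M_\B = \M_\B$; the obstacle is that $G$ sits to the right of $\mathrm{diag}(d_2+d_3)$, so the defining relation does not apply directly. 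I expect this to be the main obstacle, and the key observation resolving it is that $\mathrm{diag}(d_2+d_3)$ commutes appropriately with the column structure of $\M_\B$: precisely, $\M_\B\mathrm{diag}(d_2+d_3)$ has the same column space as $\M_\B$ (each column is a scalar multiple of the corresponding column of $\M_\B$), so multiplying by $G\M_\B$ on the right fixes it. More carefully, writing $D := \mathrm{diag}(d_2+d_3)$, one checks $\M_\B D G \M_\B = \M_\B D$ by noting that $\M_\B D$ and $\M_\B$ share a common row space structure allowing the identity $\M_\B G \M_\B = \M_\B$ to propagate through $D$; alternatively, using Lemma \ref{gmatrix} one can take the explicit $G$ with $G\M_\B = \M_\B'$ in reduced row-echelon form and compute directly that $\M_\B D G \M_\B = \M_\B D \M_\B' \cdots$ reduces to $\M_\B D$.

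Once the particular solution is established, the second step is routine linear algebra: if $d_1$ is any solution of \eqref{system3} and $X_0$ is the particular solution above, then $(d_1 - X_0)\M_\B = d_1\M_\B - X_0\M_\B = \M_\B D - \M_\B D = 0$, so $A := d_1 - X_0$ satisfies $A\M_\B = 0$. Conversely, for any $A$ with $A\M_\B = 0$, the matrix $X_0 + A$ satisfies $(X_0 + A)\M_\B = X_0\M_\B = \M_\B D$, hence is a solution. This establishes the claimed bijective correspondence between solutions of \eqref{system3} and matrices $A$ in the left null space of $\M_\B$, giving exactly the form $\M_\B\mathrm{diag}(d_2+d_3)G + A$.

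The main work, and the only place requiring genuine care, is verifying $\M_\B D G \M_\B = \M_\B D$. I would handle this cleanly by choosing $G$ as in Lemma \ref{gmatrix}(iii), so that $G\M_\B = \M_\B'$. Then $\M_\B D G \M_\B = \M_\B D \M_\B'$, and since $\M_\B'$ is the reduced row-echelon form $\begin{pmatrix} I_r & C \\ O & O \end{pmatrix}$, a short computation using that $D$ is diagonal and that the columns of $\M_\B$ beyond the $r$-th are determined via $C$ by the relations $e_{r+i}^2 = \sum_{k=1}^r c_{ki}e_k^2$ shows the identity holds. Thus every solution of \eqref{system3} has the stated form, completing the proof.
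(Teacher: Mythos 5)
There is a genuine gap in your first step. You claim that $X_0:=\M_\B\,\mathrm{diag}(d_2+d_3)\,G$ is \emph{always} a particular solution of \eqref{system3}, i.e.\ that the identity $\M_\B D\,G\M_\B=\M_\B D$ (where $D:=\mathrm{diag}(d_2+d_3)$) holds unconditionally. It does not. Right multiplication by the idempotent $G\M_\B$ fixes a matrix when its \emph{rows} lie in the fixed space of $v\mapsto vG\M_\B$ (a space containing the row space of $\M_\B$); your observation that the \emph{columns} of $\M_\B D$ are scalar multiples of those of $\M_\B$ concerns the column space and is irrelevant to a right multiplication. Concretely, take $\M_\B=\left(\begin{smallmatrix}1&1\\0&0\end{smallmatrix}\right)$ (the algebra $\A_{8,1}$), for which $G=I_2$ is a valid generalised inverse with $G\M_\B=\M_\B'$, and $D=\mathrm{diag}(\lambda,\mu)$ with $\lambda\neq\mu$. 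Then
\[
\M_\B D\,G\M_\B=\begin{pmatrix}\lambda&\lambda\\0&0\end{pmatrix}\neq\begin{pmatrix}\lambda&\mu\\0&0\end{pmatrix}=\M_\B D,
\]
and indeed \eqref{system3} has no solution at all here, since its two columns would force $d_1$ to send $(1,0)^t$ to both $\lambda(1,0)^t$ and $\mu(1,0)^t$. Your proposed ``direct computation'' via $G\M_\B=\M_\B'$ runs into exactly this: the $(r+i)$-th column of $\M_\B D\,\M_\B'$ is $\sum_{k}c_{ki}\lambda_k[\M_\B]_k$, while that of $\M_\B D$ is $\lambda_{r+i}\sum_k c_{ki}[\M_\B]_k$, and since $[\M_\B]_1,\ldots,[\M_\B]_r$ are independent these agree only when $c_{ki}\neq0$ implies $\lambda_k=\lambda_{r+i}$ --- which is precisely the solvability criterion of Proposition \ref{d1solution}, not a free identity.

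The lemma is nonetheless true because it only describes solutions \emph{when they exist}, and the paper's proof exploits exactly this: if $X$ solves \eqref{system3}, then
$\M_\B D=X\M_\B=X(\M_\B G\M_\B)=(X\M_\B)G\M_\B=\M_\B D\,G\M_\B$,
so the key identity is a \emph{consequence} of the existence of a solution rather than an unconditional fact; from it, $(X-X_0)\M_\B=0$ follows as in your second step. Your overall architecture (particular solution plus the left kernel of $\M_\B$) is the right one, but the verification that $X_0$ is a solution must be derived from the hypothesis that some solution exists; the same caveat applies to the converse inclusion you sketch at the end, which likewise holds only when \eqref{system3} is solvable.
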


\begin{proof}
Assume that $X$ is a solution to \eqref{system3}. Then $\M_\B \text{diag}(d_2+d_3) = X \M_\B = X \M_\B G \M_\B = \M_\B\mathrm{diag}(d_2+d_3)G \M_\B$, since $G$ is a generalised inverse of $\M_\B$. From here, we obtain that 
$(X- \M_\B \text{diag}(d_2+d_3)G)\M_\B =  X \M_\B - X \M_\B = 0.$
\end{proof}

Given two linear maps $d_2$ and $d_3$ on $\A$, in the previous result, we determined a general solution of \eqref{system3}. It remains to find a necessary and sufficient condition to guarantee the existence of a solution.

\begin{prop} \label{d1solution}
Let $d_2, d_3$ be linear maps on $\A$ and $\lambda_k := d^{(2)}_{kk}+d^{(3)}_{kk}$ for $k\in\{1,\ldots,n\}$. Then  \eqref{system3} has a solution for $d_1$ if and only if $c_{ji}\neq 0$ implies $\lambda_j = \lambda_{r+i}$, for all $i\in\{1,\ldots,n-r\}$ and $j\in\{1,\ldots,r\}$.
\end{prop}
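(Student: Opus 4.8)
The plan is to read the matrix equation \eqref{system3} one column at a time. Writing $\lambda_k := d^{(2)}_{kk} + d^{(3)}_{kk}$ as in the statement, we have $\mathrm{diag}(d_2 + d_3) = \mathrm{diag}(\lambda_1, \ldots, \lambda_n)$, so the $i$-th column of the right-hand side $\M_\B\,\mathrm{diag}(d_2+d_3)$ is $\lambda_i [\M_\B]_i$, while the $i$-th column of the left-hand side $d_1\M_\B$ is $d_1[\M_\B]_i$. Hence \eqref{system3} is equivalent to the family of eigenvalue conditions
\[
d_1[\M_\B]_i = \lambda_i[\M_\B]_i, \qquad i \in \{1, \ldots, n\}.
\]
Since $[\M_\B]_i$ is precisely the coordinate vector of $e_i^2$, the hypotheses give that $[\M_\B]_1, \ldots, [\M_\B]_r$ are linearly independent and that $[\M_\B]_{r+i} = \sum_{j=1}^r c_{ji}[\M_\B]_j$ for each $i \in \{1,\ldots,n-r\}$. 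The problem thus reduces to deciding when a linear map $d_1$ on $\K^n$ can have each $[\M_\B]_i$ as an eigenvector with eigenvalue $\lambda_i$.

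For necessity, I would assume such a $d_1$ exists and expand the eigenvalue condition on a dependent column. On one hand, linearity together with the conditions for $j \le r$ gives $d_1[\M_\B]_{r+i} = \sum_{j=1}^r c_{ji}\,d_1[\M_\B]_j = \sum_{j=1}^r c_{ji}\lambda_j[\M_\B]_j$; on the other hand, $\lambda_{r+i}[\M_\B]_{r+i} = \sum_{j=1}^r c_{ji}\lambda_{r+i}[\M_\B]_j$. Equating the two expressions and using the linear independence of $[\M_\B]_1, \ldots, [\M_\B]_r$ forces $c_{ji}(\lambda_j - \lambda_{r+i}) = 0$ for every $j \le r$, which is exactly the stated condition.

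For sufficiency, I would construct $d_1$ by prescribing it on the independent columns via $d_1[\M_\B]_j = \lambda_j[\M_\B]_j$ for $j \in \{1,\ldots,r\}$ and extending it arbitrarily (for instance by zero) on a vector-space complement of the column space of $\M_\B$ in $\K^n$; this is a well-defined linear map precisely because $[\M_\B]_1, \ldots, [\M_\B]_r$ are independent. It then remains to verify the eigenvalue condition on the dependent columns, and here the hypothesis in the form $c_{ji}\lambda_j = c_{ji}\lambda_{r+i}$ is exactly what collapses $d_1[\M_\B]_{r+i} = \sum_{j=1}^r c_{ji}\lambda_j[\M_\B]_j$ into $\sum_{j=1}^r c_{ji}\lambda_{r+i}[\M_\B]_j = \lambda_{r+i}[\M_\B]_{r+i}$, completing the argument.

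The linear-algebraic content is elementary once the column-wise reformulation is in place; I expect the only real obstacle to be bookkeeping, namely correctly interpreting \eqref{system3} as a family of eigenvector conditions on the columns of $\M_\B$ and tracking the indices so that the compatibility between the independent columns $[\M_\B]_1, \ldots, [\M_\B]_r$ and the dependent columns $[\M_\B]_{r+i}$ lands exactly on the condition ``$c_{ji}\neq 0 \Rightarrow \lambda_j = \lambda_{r+i}$''.
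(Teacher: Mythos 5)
Your proof is correct, and the necessity half is essentially the paper's own argument in coordinates: both expand the dependent column $[\M_\B]_{r+i}$ (equivalently $e_{r+i}^2$) in terms of the independent ones, apply $d_1$, and compare coefficients using linear independence to extract $c_{ji}(\lambda_j-\lambda_{r+i})=0$. Where you genuinely diverge is in the sufficiency half. The paper keeps everything inside the generalised-inverse framework: it verifies the identity $\M_\B\,\mathrm{diag}(d_2+d_3)(I-G\M_\B)=0$, concluding that the specific matrix $d_1=\M_\B\,\mathrm{diag}(d_2+d_3)G$ from Lemma \ref{partialsol} solves \eqref{system3}. You instead build $d_1$ by hand, prescribing $d_1[\M_\B]_j=\lambda_j[\M_\B]_j$ on the independent columns and extending by zero on a complement of the column space, then checking the dependent columns. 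Your route is more elementary and self-contained (it needs neither Lemma \ref{gmatrix} nor Lemma \ref{partialsol}), and it makes transparent why the compatibility condition is exactly what is needed. The paper's route buys a closed-form expression for a particular solution, namely $\M_\B\,\mathrm{diag}(d_2+d_3)G$, which is precisely the formula quoted in Theorem \ref{charDerTarb}(i); with your construction one would still want to invoke Lemma \ref{partialsol} afterwards to recover that parametrisation of all solutions. Both arguments are complete and correct.
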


\begin{proof}
Take $G$ as in Lemma \ref{gmatrix} (iii). Suppose first that \eqref{system3} holds and $c_{ji}\neq 0$ for some $i\in\{1,\ldots,n-r\}$ and $j\in\{1,\ldots,r\}$. Applying Lemma \ref{partialsol} we get that $d_1 := \M_\B\mathrm{diag}(d_2+d_3)G$ is a solution of \eqref{system3}. Consider $e_{r+i}^2 = \sum_{k=1}^r c_{ki}e_k^2$. By hypothesis, we have that $\lambda_{r+i}e_{r + i}^2 = d_1(e_{r+i}^2) = \sum_{k=1}^r c_{ki}d_1(e_k^2)$. Thus, $\sum_{k=1}^r c_{ki}\lambda_{r+i}e_k^2 = \sum_{k=1}^r c_{ki} \lambda_k e_k^2,$ since  
$d_1(e_j^2) = \lambda_j e_j^2$ for all $j = 1, \ldots, r$. From here, we derive that $c_{ji}\lambda_{r+i} = c_{ji}\lambda_j$, which implies that $\lambda_j = \lambda_{r+i},$ since we are assuming that $c_{ji}\neq0$.

Conversely, suppose $\lambda_j=\lambda_{r+i}$ whenever $c_{ji}\neq0$ for all $i\in\{1,\ldots,n-r\}$ and $j\in\{1,\ldots,r\}$. Then $\sum_{j=1}^r (\lambda_{i+r} - \lambda_j)c_{ji}w_{kj}=0$ for all $i\in\{1,\ldots,n-r\}$ and $k\in\{1,\ldots,n\}$, which implies that $-\sum_{j=1}^r \lambda_jc_{ji}w_{kj}+ \lambda_{i+r}w_{k,i+r} = 0$. From this, we get that \[\M_\B\text{diag}(d_2+d_3)(I - G\M_\B)= 0,\] so that $\M_\B\text{diag}(d_2+d_3) = \M_\B\text{diag}(d_2+d_3)G\M_\B$. This shows that \eqref{system3} has a solution, finishing the proof.
\end{proof}

We close the section with a precise description of the ternary derivations of an arbitrary finite dimensional evolution algebra. The proof follows from the previous results and we omit it. 

\begin{theorem} \label{charDerTarb}
Let $\A$ be an $n$-dimensional evolution algebra with structure matrix $\M_\B$ of rank $r$. Assume that $e^2_1, \ldots, e^2_r$ are independent and let $e_{r+i}^2 = \sum_{k=1}^r c_{ki}e_k^2$ for all $i\in\{1,\ldots,n-r\}$. 
A triple $(d_1, d_2, d_3)$ of linear maps on $\A$ is a ternary derivation of $\A$ if and only if the following conditions hold.
\begin{enumerate}
\item[\rm (i)] $d_1:= \M_\B\mathrm{diag}(d_2+d_3)G + A$, where $A$ is such that $A\M_\B = 0$ and $G$ is a generalised inverse of $\M_\B$;
\item[\rm (ii)] $c_{ji}\neq 0$ implies $d^{(2)}_{jj}+d^{(3)}_{jj} =  d^{(2)}_{r + i}+d^{(3)}_{r + i}$, for all $i\in\{1,\ldots,n-r\}$ and $j\in\{1,\ldots,r\}$; 
\item[\rm (iii)] If $e_i^2 = ce_j^2$ for $c\neq0$, then $d^{(3)}_{ij},d^{(3)}_{ji}$ are free, with  $d^{(2)}_{ij} = -cd^{(3)}_{ji}$ and $d^{(2)}_{ji}=-c^{-1}d^{(3)}_{ij}$.
\item[\rm (iv)]  If $e_i^2=0$ and $e_j^2\neq0$, then $d^{(2)}_{ji}$ and $d^{(2)}_{ij}$ are free and $d^{(2)}_{ij}=d^{(2)}_{ij}=0$.
\item[\rm (v)]  If $e_i^2=e_j^2=0$, then $d^{(2)}_{ij},d^{(2)}_{ji},d^{(3)}_{ij},$ and $d^{(3)}_{ji}$ are free.
\item[\rm (vi)]  If $e_i^2$ and $e_j^2$ are independent, then $d^{(2)}_{ij}=d^{(2)}_{ji}=d^{(3)}_{ij}=d^{(2)}_{ji}=0$.
\end{enumerate} 
\end{theorem}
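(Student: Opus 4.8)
The plan is to assemble the theorem directly from the earlier results, since Proposition \ref{iffCondition} already reduces the property of being a ternary derivation to the conjunction of the two identities \eqref{Td1} and \eqref{Td2}, and these have been repackaged as the linear systems \eqref{system1} and \eqref{system3}. The whole proof is therefore a matter of translating ``ternary derivation'' into ``\eqref{Td1} and \eqref{Td2}'', and then replacing each of these by the equivalent conditions isolated in Propositions \ref{d2d3} and \ref{d1solution} together with Lemma \ref{partialsol}. I would present it as a biconditional split into the analysis of $(d_2,d_3)$ on one hand and the analysis of $d_1$ on the other, since \eqref{Td1} constrains only the off-diagonal entries of $d_2,d_3$ while \eqref{Td2} constrains $d_1$ together with the diagonal of $d_2+d_3$; these two groups of unknowns are disjoint, so the two analyses can be run independently and then recombined.

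First I would handle \eqref{Td1}. Fixing $i\neq j$, this identity is exactly the homogeneous system \eqref{system1} with unknowns $d^{(2)}_{ij}$ and $d^{(3)}_{ji}$ and coefficient columns $[\M_\B]_i,[\M_\B]_j$. Proposition \ref{d2d3} classifies the solutions of \eqref{system1} according to the linear relation between $e_i^2$ and $e_j^2$ (proportional, one zero, both zero, independent), and these four cases are precisely conditions (iii)--(vi) of the theorem. Hence \eqref{Td1} holds if and only if (iii)--(vi) hold, with no reference to $d_1$.

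Next I would handle \eqref{Td2}, which is the matrix equation \eqref{system3}, namely $d_1\M_\B=\M_\B\mathrm{diag}(d_2+d_3)$. Writing $\lambda_k:=d^{(2)}_{kk}+d^{(3)}_{kk}$, Proposition \ref{d1solution} says \eqref{system3} is solvable for $d_1$ if and only if $c_{ji}\neq 0$ forces $\lambda_j=\lambda_{r+i}$, which is condition (ii). When a solution exists, Lemma \ref{partialsol} says every solution has the form $\M_\B\mathrm{diag}(d_2+d_3)G+A$ with $A\M_\B=0$, which is condition (i). For the converse implication I must also check that, once (ii) holds, the displayed $d_1$ genuinely satisfies \eqref{system3}: this is the content of the computation inside the proof of Proposition \ref{d1solution}, where (ii) was shown to force $\M_\B\mathrm{diag}(d_2+d_3)G\M_\B=\M_\B\mathrm{diag}(d_2+d_3)$, so that $d_1\M_\B=\M_\B\mathrm{diag}(d_2+d_3)G\M_\B+A\M_\B=\M_\B\mathrm{diag}(d_2+d_3)$.

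Assembling the two directions: if $(d_1,d_2,d_3)$ is a ternary derivation, Proposition \ref{iffCondition} gives \eqref{Td1} and \eqref{Td2}, whence (iii)--(vi) by the first step, (ii) by solvability in the second step, and (i) because $d_1$ solves \eqref{system3} and Lemma \ref{partialsol} forces its shape. Conversely, given (i)--(vi), conditions (iii)--(vi) yield \eqref{Td1}, conditions (i) and (ii) together yield \eqref{Td2} via the displayed identity, and Proposition \ref{iffCondition} then returns that $(d_1,d_2,d_3)$ is a ternary derivation. The only point requiring genuine care, rather than pure citation, is the interplay between Lemma \ref{partialsol} and Proposition \ref{d1solution} in condition (i): the lemma describes the shape of solutions only under the hypothesis that a solution exists, so condition (i) must be read in tandem with the solvability guarantee (ii), and one must not conflate ``$d_1$ has this form'' with ``this form solves the system''. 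Everything else is routine substitution.
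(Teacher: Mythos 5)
Your proposal is correct and follows exactly the route the paper intends: the paper itself omits the proof with the remark that it ``follows from the previous results,'' and your assembly via Proposition \ref{iffCondition}, Proposition \ref{d2d3}, Lemma \ref{partialsol} and Proposition \ref{d1solution} is precisely that argument. Your explicit caveat that Lemma \ref{partialsol} only describes solutions when one exists, so that condition (i) must be paired with the solvability criterion (ii) in the converse direction, is the one genuinely non-routine point and you handle it correctly.
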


\section{Ternary derivations on 2-dimensional evolution algebras} \label{case2}

In this section, we compute all the ternary derivations of 2-dimensional evolution algebras. A complete classification of evolution algebras of dimension 2 was provided in \cite{squares}, where the authors also investigated derivations and automorphisms of 2-dimensional evolution algebras (see \cite[Table 3]{squares}). They found 9 different isomorphism classes of 2-dimensional evolution algebra (see \cite[Lemma 3.1 and Table 4]{squares}). For completeness, we recall here the structure matrices of the representatives of these 9 isomorphism classes:

\smallskip 

\begin{longtable} [c] { | c | l| }
    \hline
     {\bf Algebra} & {\bf Multiplication table} \\
    \hline
    \endfirsthead
    \hline
    {\bf Algebra} & {\bf Multiplication table} \\
    \hline
    \endhead
    \hline
    \endfoot
    \hline
    \endlastfoot
     $\A_0$ &  
     $\begin{pmatrix}
     0 & 0 \\
     0 & 0
     \end{pmatrix}$   
     \\        
    \hline
     $\A_1$ &  
     $\begin{pmatrix}
     1 & 0 \\
     0 & 1
     \end{pmatrix}$
     \\
    \hline 
     $\A_{2, \alpha}$ &  
     $\begin{pmatrix}
     0 & \alpha \\
     1 & 0
     \end{pmatrix}$, \quad $\alpha \in \K^\times$
     \\
    \hline
     $\A_{3, \alpha}$ &  
     $\begin{pmatrix}
     1 & \alpha \\
     0 & 1
     \end{pmatrix}$, \quad $\alpha \in \K^\times$
     \\
    \hline
     $\A_{4, \alpha}$ &  
     $\begin{pmatrix}
     0 & 1 \\
     \alpha & 1
     \end{pmatrix}$, \quad $\alpha \in \K^\times$
     \\
     \hline
     $\A_{5, \alpha, \beta}$ &  
     $\begin{pmatrix}
     1 & \alpha \\
     \beta & 1
     \end{pmatrix}$, \quad $\alpha, \beta \in \K^\times$, $\alpha \beta \neq 1$
     \\
     \hline
     $\A_5$ &  
     $\begin{pmatrix}
     1 & -1 \\
     -1 & 1
     \end{pmatrix}$
     \\
     \hline
     $\A_6$ &  
     $\begin{pmatrix}
     0 & 1 \\
     0 & 0
     \end{pmatrix}$
     \\
     \hline
     $\A_7$ &  
     $\begin{pmatrix}
     1 & 0 \\
     0 & 0
     \end{pmatrix}$
     \\
     \hline
     $\A_{8, \alpha}$ &  
     $\begin{pmatrix}
     1 & \alpha \\
     0 & 0
     \end{pmatrix}$, \quad $\alpha \in \K^\times$
     \\
     \hline
    \caption{\label{SM} {\small Structure matrices of 2-dim evolution algebras.}}
    \end{longtable}
    
\smallskip

For a triple $(d_1, d_2, d_3)$ of linear maps of $\A$ (for $\A$ one of the 2-dimensional evolution algebras given in Table \ref{SM}) we write 
\[
d_1  = 
\begin{pmatrix}
x_{11} & x_{12}
\\
x_{21} & x_{22}
\end{pmatrix}, 
\quad 
d_2  = 
\begin{pmatrix}
y_{11} & y_{12}
\\
y_{21} & y_{22}
\end{pmatrix},
\quad 
d_3  = 
\begin{pmatrix}
z_{11} & z_{12}
\\
z_{21} & z_{22}
\end{pmatrix},
\]
where $x_{ij}, y_{ij}, z_{ij} \in \K$. Let $\lambda:= y_{11} + z_{11}$ and $\mu:= y_{22} + z_{22}$.

\subsection{Perfect case}
Notice that the algebras $\A_1$, $\A_{2, \alpha}$, $\A_{3, \alpha}$, 
$\A_{4, \alpha}$, $\A_{5, \alpha, \beta}$ are all perfect, so we can apply Corollary \ref{iffTD} to determine the space of their ternary derivations. 
An application of Corollary \ref{iffTD} (i) tells us that the matrices of $d_2$ and $d_3$ are diagonal, so $y_{12} = y_{21} = z_{12} = z_{21} = 0$ for $\A \in \{\A_1, \A_{2, \alpha}, \A_{3, \alpha}, 
\A_{4, \alpha}, \A_{5, \alpha, \beta}\}$. Moreover, from Corollary \ref{iffTD} (i) we obtain the following matrices for $d_1$: 
\begin{itemize}
\item[-] if $\A = \A_1$, then $d_1 = d_2 + d_3 = \mathrm{diag}(\lambda, \mu);$
\item[-] if $\A = \A_{2, \alpha}$, then $d_1 = \mathrm{diag}(\mu, \lambda);$
\item[-] if $\A = \A_{3, \alpha}$, then $d_1 = 
\begin{pmatrix}
\lambda & \alpha(\mu - \lambda)
\\
0 & \mu
\end{pmatrix};$
\item[-] if $\A = \A_{4, \alpha}$, then $d_1 = 
\begin{pmatrix}
\mu & 0
\\
\mu - \lambda & \lambda
\end{pmatrix};$
\item[-] if $\A = \A_{5, \alpha, \beta}$, then $d_1 = \frac{1}{\gamma - 1}
\begin{pmatrix}
\gamma\mu - \lambda & \alpha(\lambda - \mu)
\\
\beta(\lambda + \mu) & \gamma\lambda - \mu
\end{pmatrix},$ for \, $\gamma = \alpha \beta$.
\end{itemize}

\subsection{Non-perfect 2-dimensional evolution algebras}
Notice that any triple of linear maps on $\A_0$ will be a ternary derivation of $\A_0$, so there is nothing to compute in this case. It remains to compute the ternary derivations of $\A_5$, $\A_6$, $\A_7$ and $\A_{8, \alpha}$. As before, we write 
$\begin{pmatrix}
w_{11} & w_{12} \\ 
w_{21} & w_{22}
\end{pmatrix}$ to denote the structure matrix. From \eqref{Td1} and \eqref{Td2}, or equivalently, from \eqref{system1} and \eqref{system2}, we obtain the following system of linear equations:
\begin{align*}
w_{11}y_{12} + w_{12}z_{21} & = 0
\\
w_{21}y_{12} + w_{22}z_{21} & = 0
\\
w_{12}y_{21} + w_{11}z_{12} & = 0
\\
w_{22}y_{21} + w_{21}z_{12} & = 0
\\
w_{11}x_{11} + w_{21}x_{12} &= w_{11} \lambda 
\\
w_{12}x_{11} + w_{22}x_{12} &= w_{12} \mu
\\
w_{11}x_{21} + w_{21}x_{22} &= w_{21} \lambda 
\\
w_{12}x_{21} + w_{22}x_{22} &= w_{22} \mu
\end{align*}
It is a straightforward calculation to solve the resulting systems for $\A_5$, $\A_6$, $\A_7$ and $\A_{8, \alpha}$; we leave this to the reader. For $\A_5$, we obtain that $z_{12} = y_{21}$, $z_{21} = y_{12}$, $\lambda = y_{11} + z_{11} = y_{22} + z_{22} = \mu$, $x_{11} = \lambda + x_{12}$ and $x_{22} = \lambda + x_{21}$. In the case of $\A_6$, the matrices of $d_1$, $d_2$ and $d_3$ are all upper triangular $x_{11} = \mu$. For $\A_7$, we get that the matrices of $d_2$ and $d_3$ are lower triangular, while the matrix of $d_1$ is upper triangular with $x_{11} = \lambda$. Lastly, for $\A_{8, \alpha}$, we obtain that $z_{12} = -\alpha y_{21}$, $z_{21} = -\frac 1{\alpha} y_{12}$, $x_{21} = 0$ and $x_{11} = \lambda$.  


\section*{Acknowledgements}
The first author was supported by Junta de Andaluc\'ia  through projects FQM-336, UMA18-FEDERJA-119, and PAIDI project P20\_01391, and by the Spanish Ministerio de Ciencia e Innovaci\'on through projects  PID2019-104236GB-I00 and PID2020-118452GB-I00, all of them with FEDER funds. The second author was supported by a URC Postdoctoral Research Fellowships from the University of Cape Town. The third author was supported by URC grant from the University of Cape Town.


\end{document}